\documentclass[12pt]{article}
\usepackage[utf8]{inputenc}
\usepackage{amssymb,amsmath,amsthm}
\usepackage{color}
\definecolor{cFF0000}{RGB}{255,0,0}
\newtheorem{theorem}{Theorem}[section]
\newtheorem{definition}{Definition}[section]

\newtheorem*{question}{Question}
\newtheorem{lemma}[theorem]{Lemma}
\newtheorem{proposition}[theorem]{Proposition}
\newtheorem{corollary}[theorem]{Corollary}

\begin{document}

\title{Moving to Extremal Graph Parameters}

\author{P.J.~Cameron \\ Queen Mary University of London \and C.A.~Glass \\ City University London \and   R.~Schumacher\thanks{EPSRC Grant EP/P504872/1} \\ City University London}

\maketitle

\begin{abstract}
Which graphs, in the class of all graphs with given numbers $n$ and $m$ of
edges and vertices respectively, minimizes or maximizes the value of some
graph parameter? In this paper we develop a technique which provides answers
for several different parameters: the numbers of edges in the line graph,
acyclic orientations, cliques, and forests. (We minimize the first two and
maximize the third and fourth.)

Our technique involves two moves on the class of graphs. A \emph{compression}
move converts any graph to a form we call \emph{fully compressed}: the fully
compressed graphs are split graphs in which the neighbourhoods of points in
the independent set are nested. A second \emph{consolidation} move takes each
fully compressed graph to one particular graph which we call $H_{n,m}$.
We show monotonicity of the parameters listed for these moves in many cases, which enables us to obtain our results fairly simply.

The paper concludes with some open problems and future directions.
\end{abstract}

\section{Introduction}

In this paper we consider graphs with given numbers of vertices and edges,
and investigate which of them have the smallest numbers of acyclic orientations
or edges in the line graph, or the greatest numbers of cliques or forests.
We develop a technique which handles all these parameters, and possibly others.

It is known that calculating the exact values for the number of acyclic
orientations and the number of forests is \#P-hard \cite{F.Jaeger1990}.
However, we are able to find the extremal graph for these parameters.
The result for the number of acyclic orientations was found by Linial
\cite{lowerboundspecial}, based on work by Stanley \cite{Stanley}.
The power of our method is the unified approach to several parameters.

In section \ref{sec:compression_a}, we define an operation called 
\emph{compression} for reducing graphs to a standard structure.
In Section \ref{sec:extremalgraphs}, we classify the \emph{fully compressed
graphs}, those which cannot be further compressed using this move. These are
\emph{split graphs} (the vertex set is the disjoint union of a clique and an
independent set) in which the neighbourhoods of vertices in the independent
set are nested. The structure of these graphs allows us to apply a further 
\emph{consolidation} move in Section \ref{sec:compression_b} to obtain a
single graph $H_{n,m}$, in which a clique and a set of isolated vertices
together contain all but at most one vertex of the graph.

We will first show that compression and consolidation terminate, and then in
Section \ref{sec:other_parameters} show that both moves are monotone in
several graph parameters. The terminal graph $H_{n,m}$ we obtain for
these moves is therefore also extremal for each of the graph parameters.

\section{Background}

First we give some basic definitions which will be used throughout the paper. Much of this notation is the standard notation used in the literature. A graph will usually be denoted by $G = (V,E)$, and will have $n=|V|$ vertices and $m= |E|$ edges. We assume graphs to be simple i.e. there are no multiple edges and no loops, as this is required for the compression move.

\begin{definition}
The \emph{neighbourhood} of a vertex $x$, $\mathcal{N}(x)$ is the set of vertices of $G$ that are connected to $x$ by an edge in $G$, i.e. $\mathcal{N}(x) = \{ v \in V(G) | (x,v) \in E(G) \}$.
\end{definition}

\begin{definition}
An \emph{orientation} of a graph $G$ is an assignment of direction to each edge of $G$. Let \emph{$(G, \theta)$} denote the graph $G$ equipped with the orientation $\theta$.
\end{definition}

\begin{definition}
Let \emph{$\mathcal{O}(G)$} be all orientations of $G$ and \emph{$\mathcal{C}(G)$} be the orientations of $G$ that contain at least one cycle respectively.
\end{definition}

\begin{definition}
A \emph{Euler subgraph} is a subgraph in which every vertex has even degree.
\end{definition}

\section{Compression} \label{sec:compression_a}

\begin{definition}
A \emph{compression} is a function on the set of all graphs. The compression associated with a graph $G$ from $x$ to $y$ for $x,y \in V(G)$, denoted by $C_{xy}(G)$, moves all edges from $(v,x)$ to $(v,y)$ for $v \in V$ that it is possible to move without creating any double edges, preserving orientation if one exists.
\end{definition}

Formally compression constitutes the following mapping on edges in $G$:

\begin{equation*}
  (a,b) \mapsto \begin{cases}
    (y,b), & \text{if $a=x, b \neq y, b \notin \mathcal{N}(y) $}.\\
    (a,y), & \text{if $b=x, a \neq y, a \notin \mathcal{N}(y) $}.\\
    (a,b), & \text{otherwise}.
  \end{cases}
\end{equation*}

Note that this map may be applied both to a directed and an undirected graph $G$. Sometimes we will refer to the orientation of the compressed graph as the \emph{compression of an orientation}. Moreover, $G$ when compressed by $C_{xy}$ is isomorphic to $G$ compressed by $C_{yx}$ under the map interchanging $x$ and $y$ and fixing all other vertices. Similarly, we refer to a compression $C_{xy}(G)$ as \emph{isomorphic} if $C_{xy}(G)$ is isomorphic to $G$. However, the graph we obtain from $G$ after compressing repeatedly is not unique. Consider the following small example.

\setlength{\unitlength}{0.5mm}
\begin{picture}(80,90)
\put(30,5){\makebox(0,0){$G$}}
\put(20,20){\line(0,1){50}}
\put(45,70){\line(1,-1){25}}
\put(45,20){\line(1,1){25}}
\put(20,70){\circle*{3}}
\put(20,20){\circle*{3}}
\put(45,70){\circle*{3}}
\put(45,20){\circle*{3}}
\put(70,45){\circle*{3}}
\put(10,70){$u$}
\put(10,20){$v$}
\put(50,70){$x$}
\put(50,20){$y$}
\put(75,45){$z$}
\end{picture}
\begin{picture}(80,90)
\put(30,5){\makebox(0,0){$A$}}
\put(20,20){\line(2,1){50}}
\put(45,70){\line(1,-1){25}}
\put(45,20){\line(1,1){25}}
\put(20,70){\circle*{3}}
\put(20,20){\circle*{3}}
\put(45,70){\circle*{3}}
\put(45,20){\circle*{3}}
\put(70,45){\circle*{3}}
\put(10,70){$u$}
\put(10,20){$v$}
\put(50,70){$x$}
\put(50,20){$y$}
\put(75,45){$z$}
\end{picture}
\begin{picture}(80,90)
\put(30,5){\makebox(0,0){$B$}}
\put(45,20){\line(0,1){50}}
\put(45,70){\line(1,-1){25}}
\put(45,20){\line(1,1){25}}
\put(20,70){\circle*{3}}
\put(20,20){\circle*{3}}
\put(45,70){\circle*{3}}
\put(45,20){\circle*{3}}
\put(70,45){\circle*{3}}
\put(10,70){$u$}
\put(10,20){$v$}
\put(50,70){$x$}
\put(50,20){$y$}
\put(75,45){$z$}
\end{picture}

\textbf{Fig. 1.} An example of two different compressions of a graph.

We can compress from $G$ to $A$ in one move, $C_{uz}(G) = A$. From $G$ to $B$ we compress twice, $C_{ux}(C_{vy}(G)) = B$. Neither of these two graphs can be compressed further.

\subsection{Partial Order}
For the remainder of this paper we consider $n$ and $m$ fixed, $m \leq \binom{n}{2}.$ We consider a graph whose vertices are the graphs up to isomorphism with $n$ vertices and $m$ edges, and whose edges identify a compression from the graph of one vertex to the graph of the other vertex. Furthermore, we add a direction to each edge in the direction of the compression. We shall show that this orientation is acyclic, so its transitive closure is a partial order on all graphs with respect to compression. In order to prove this we need to consider the line graphs of these graphs.

\begin{definition}
Given a graph $G$, the \emph{line graph} $L(G)$ is the graph whose vertices are the edges of $G$, and two vertices $x$ and $y$ in $L(G)$ are connected iff the corresponding edges share a common endpoint in $G$.
\end{definition}

This is standard notation. We will be interested in the number of edges of the line graph of $G$, $|E(L(G))|$, which we will simplify to $e_{L(G)}$. We will show that the number $e_{L(G)}$ strictly increases with each compression that gives us a graph that is not isomorphic to the original graph. This in turn implies that compression must terminate, as the number of edges of the line graph of a graph with $n$ vertices and $m$ edges is bounded.

\begin{lemma} \label{lem:comp_a_link_line}
For a graph $G$ and a non-isomorphic compression $C_{xy}(G)$ we have $e_{L(G)} < e_{L(C_{xy}(G))}$.
\end{lemma}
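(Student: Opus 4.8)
The plan is to translate the statement about line-graph edges into a statement about the degree sequence, using the standard identity $e_{L(G)} = \sum_{v \in V(G)} \binom{d(v)}{2}$, where $d(v)$ denotes the degree of $v$ in $G$. This holds because two distinct edges of a simple graph meet in at most one vertex, so the edges of $L(G)$ arising from pairs of edges of $G$ sharing a fixed vertex $v$ are exactly the $\binom{d(v)}{2}$ pairs of edges at $v$, each counted once. Since $C_{xy}(G)$ is again simple (compression avoids double edges by construction), the identity applies to it as well. Writing $m = |E(G)|$, we also have $\sum_v d(v) = 2m$, which is preserved by compression; this will cause the linear terms to cancel below.

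Next I would track exactly how the degrees change under $C_{xy}$. Only edges incident with $x$ are moved, and each such move takes an edge $(v,x)$ to $(v,y)$, so the degree of every vertex other than $x$ and $y$ is unchanged (a vertex $v$ whose edge is moved simply trades its edge to $x$ for an edge to $y$, and no double edge is created since $v \notin \mathcal{N}(y)$). If $k$ denotes the number of edges actually moved, i.e.\ $k = |\mathcal{N}(x)\setminus \mathcal{N}(y) \setminus \{y\}|$, then $d(x)$ drops to $d(x)-k$ and $d(y)$ rises to $d(y)+k$. Introducing the symmetric quantity $t = |\mathcal{N}(y)\setminus \mathcal{N}(x)\setminus\{x\}|$ and the common-neighbour count $c = |\mathcal{N}(x)\cap\mathcal{N}(y)|$, one writes $d(x) = k + c + \epsilon$ and $d(y) = t + c + \epsilon$, where $\epsilon \in \{0,1\}$ records whether $x$ and $y$ are adjacent; hence $d(y) - d(x) = t - k$ independently of $\epsilon$.

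With these in hand the computation is routine: the change in line-graph edges is
\[
  e_{L(C_{xy}(G))} - e_{L(G)}
  = \binom{d(x)-k}{2} + \binom{d(y)+k}{2} - \binom{d(x)}{2} - \binom{d(y)}{2},
\]
whose linear parts cancel (as $\sum_v d(v)$ is fixed) and whose quadratic part simplifies, using $d(y)-d(x) = t-k$, to $kt$. So it remains only to show that a \emph{non-isomorphic} compression forces $k \geq 1$ and $t \geq 1$, whence $kt \geq 1 > 0$.

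This last point is where I expect the only real content to lie, and I would dispatch it using the symmetry already noted in the text, namely that $C_{xy}(G)$ is isomorphic to $C_{yx}(G)$ via the transposition of $x$ and $y$. If $k = 0$ then no edges move and $C_{xy}(G) = G$; if $t = 0$ then, by the same description applied in the reverse direction, $C_{yx}(G) = G$, and the stated symmetry gives $C_{xy}(G) \cong C_{yx}(G) = G$. In either case the compression is isomorphic, so a non-isomorphic compression has both $k \geq 1$ and $t \geq 1$, completing the proof. The main obstacle, then, is not any hard estimate but the bookkeeping of the second step --- correctly identifying which edges move (in particular checking that the edge $(x,y)$ and the common-neighbour edges stay fixed) and verifying that the moved vertices keep their degree --- together with the clean reduction of the $t=0$ case to the already-established $C_{xy}\cong C_{yx}$ symmetry.
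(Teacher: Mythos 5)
Your proof is correct and follows the same skeleton as the paper's: both convert the count of line-graph edges into the degree-sum identity and observe that only the degrees of $x$ and $y$ change under $C_{xy}$. The difference lies in how the \emph{strict} inequality is secured. The paper appeals to strict convexity of $\binom{\cdot}{2}$ together with the assertion that ``the difference $|d(x)-d(y)|$ is maximised by compression,'' which leaves unverified exactly the point where non-isomorphy must enter: if no edge moves ($k=0$), or if every neighbour of $y$ other than $x$ is already a neighbour of $x$ ($t=0$), the spread of the degree pair does not strictly increase and the convexity argument yields only a weak inequality. Your explicit computation pins the change down as exactly $kt$, and then closes this gap cleanly: $k=0$ gives $C_{xy}(G)=G$ outright, while $t=0$ gives $C_{yx}(G)=G$ and hence $C_{xy}(G)\cong G$ via the symmetry $C_{xy}(G)\cong C_{yx}(G)$ already noted in the text --- precisely the two degenerate cases the paper's wording glosses over. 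A side remark in your favour: your identity $e_{L(G)}=\sum_{v}\binom{d(v)}{2}$ is the correct one; the paper's extra factor $\tfrac12$ (justified there by ``counting each edge twice'') is spurious, since in a simple graph two distinct edges share at most one endpoint, so each line-graph edge is counted exactly once in the sum. That constant is harmless for the monotonicity conclusion, but your bookkeeping is the accurate version.
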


\begin{proof}
Let $x$ and $y$ be vertices in $G$ and $H:=C_{xy}(G)$ and $x'$ and $y'$ be the vertices $x$ and $y$ in $H$. By counting each edge twice in the line graph we obtain the following formula for $ e_{L(G)}$:

\[ e_{L(G)} = \frac{1}{2} \sum_{v \in V(G)} \binom{d(v)}{2} .\]

The function $f(x) = \binom{x}{2}$ is a strictly convex function, and as $d(x)+d(y) = d(x') + d(y')$, and the difference $|d(x)-d(y)|$ is maximised by compression, we have that

\[ \binom{d(x)}{2} + \binom{d(y)}{2} < \binom{d(x')}{2} + \binom{d(y')}{2}.
\]
Since $d(v)$ is unchanged for $v \ne x,y$, the result follows from the above expression for the values of $e_{L(G)}$ and $e_{L(H)}$.
\end{proof}

\begin{corollary} \label{lem:partial_order}
The compression move gives us a partial order on the set of all graphs with $n$ vertices and $m$ edges.
\end{corollary}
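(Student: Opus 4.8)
The plan is to deduce the corollary directly from the strict monotonicity established in Lemma \ref{lem:comp_a_link_line}, by exhibiting $e_L$ as a potential (height) function on the directed graph of compressions. First I would make precise what is being claimed: the vertices of the directed graph are isomorphism classes of graphs on $n$ vertices and $m$ edges, and we place a directed arrow from the class of $G$ to the class of $H$ exactly when there is a non-isomorphic compression $H = C_{xy}(G)$. A relation of this kind gives rise to a strict partial order under transitive closure precisely when the directed graph contains no directed cycle: transitivity is automatic once we pass to the transitive closure, and irreflexivity (equivalently, antisymmetry of the associated partial order) is equivalent to the absence of directed cycles.

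Thus the heart of the argument is to rule out directed cycles. I would argue by contradiction: suppose there were a directed cycle $G_0 \to G_1 \to \cdots \to G_k = G_0$ in which every arrow is a non-isomorphic compression. Applying Lemma \ref{lem:comp_a_link_line} to each arrow in turn yields the chain of strict inequalities $e_{L(G_0)} < e_{L(G_1)} < \cdots < e_{L(G_k)} = e_{L(G_0)}$, which is impossible. Hence no directed cycle exists and the directed graph of compressions is acyclic. Its transitive closure is then transitive and irreflexive, so it is a strict partial order, and its reflexive closure is the desired partial order on isomorphism classes.

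I do not expect a serious obstacle here, since the essential content is already carried by the monotone invariant $e_L$ furnished by the previous lemma. The only points requiring a little care are bookkeeping ones: observing that $e_L$ is invariant under graph isomorphism, so that the height function is genuinely well defined on isomorphism classes rather than on labelled graphs; and noting that isomorphic compressions contribute no arrows (they would be loops at a single class, which we discard), so that self-comparisons never arise. With these observations in place, the passage from ``no directed cycles'' to ``the transitive closure is a partial order'' is standard.
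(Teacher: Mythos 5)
Your argument is correct and is exactly the one the paper intends: the corollary is left without an explicit proof, but the surrounding text makes clear that the strict increase of $e_{L(G)}$ under non-isomorphic compression (Lemma \ref{lem:comp_a_link_line}) is meant to serve as the height function ruling out directed cycles, so that the transitive closure is a partial order. Your additional bookkeeping remarks (isomorphism-invariance of $e_L$, discarding isomorphic compressions as loops) are sound and only make explicit what the paper leaves implicit.
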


\begin{lemma}
The complement of a compressed graph is identical to the compression of a complement, i.e.
\[ C_{xy}(\overline{G}) = \overline{C_{xy}(G)} \]
\end{lemma}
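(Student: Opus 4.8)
The plan is to localize the claim to the way $x$ and $y$ attach to the rest of the graph. Inspecting the edge map defining $C_{xy}$, an edge can only be relocated if one of its endpoints equals $x$; every pair lying entirely inside $V\setminus\{x,y\}$, together with the edge (or non-edge) $xy$ itself, is fixed by compression. Complementation merely toggles the presence of each potential edge. Hence on the $xy$ pair and on all pairs inside $V\setminus\{x,y\}$ the two graphs $C_{xy}(\overline G)$ and $\overline{C_{xy}(G)}$ coincide automatically, and it remains only to compare how each individual vertex $v\in V\setminus\{x,y\}$ is joined to the pair $\{x,y\}$.

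For a fixed such $v$ there are four possibilities, recorded by the pair $\bigl(v\in\mathcal N(x),\,v\in\mathcal N(y)\bigr)$: adjacent to both, to neither, to $x$ only, or to $y$ only. First I would compute the action of $C_{xy}$ on these four types. The move condition ``$b\notin\mathcal N(y)$'' says exactly that $v$ is non-adjacent to $y$, so $C_{xy}$ carries ``adjacent to $x$ only'' to ``adjacent to $y$ only'' and fixes the other three types. Next I would record the trivial action of complementation, which interchanges ``both'' with ``neither'' and ``$x$ only'' with ``$y$ only''. Composing these two maps in the two orders and comparing type by type then reduces the whole lemma to a finite check.

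The main obstacle is precisely the two mixed types. On ``both'' and ``neither'' the two compositions agree on the nose, but carrying out the composition on the mixed types shows that $C_{xy}(\overline G)$ and $\overline{C_{xy}(G)}$ agree only after interchanging the roles of $x$ and $y$ at these vertices; concretely one is led to $C_{xy}(\overline G)=\overline{C_{yx}(G)}$. I would therefore finish by invoking the symmetry noted earlier, that $C_{xy}(G)$ and $C_{yx}(G)$ are isomorphic under the transposition of $x$ and $y$, which identifies $\overline{C_{yx}(G)}$ with $\overline{C_{xy}(G)}$ and delivers the stated identity (read as an equality of graphs up to this natural relabeling of $x$ and $y$). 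Pinning down that relabeling cleanly, rather than the bookkeeping of the four cases, is where the care is needed.
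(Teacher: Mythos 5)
Your proof is correct and is essentially an expanded version of the paper's one-sentence argument: the paper's phrase ``moving all edges that can be moved in one direction is the same as moving all non-edges that can be moved in the opposite direction'' is precisely your intermediate identity $C_{xy}(\overline{G})=\overline{C_{yx}(G)}$, which you then reconcile with the stated form via the transposition of $x$ and $y$. Your four-case check correctly surfaces the subtlety that the displayed equality holds only up to this relabelling (the mixed-type vertices all end up attached to $y$ on one side and to $x$ on the other), a point the paper leaves implicit in the word ``opposite''.
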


\begin{proof}
Moving all edges that can be moved in one direction is the same as moving all non-edges that can be moved in the opposite direction.
\end{proof}

\subsection{An extremal set of graphs} \label{sec:extremalgraphs}

We see that compression gives us a partial order on the set of all graphs with $n$ vertices and $m$ edges. It is natural to ask which graphs are at the bottom and which at the top of this ordering. We first wish to classify so called \emph{fully compressed graphs} to which we cannot apply any further non-isomorphic compression. It is helpful here to introduce the following definition.

\begin{definition}
A \emph{split graph} is a graph in which the vertices can be partitioned into a clique and an independent set.
\end{definition}

\begin{definition}
Let \emph{$H(k,n_0,n_1,...,n_{k-1})$} be the graph with a complete subgraph $K_k$, and with $n_i$ extra vertices of degree $i$ whose neighbourhoods are in $K_k$ and are nested.
\end{definition}

This definition identifies a graph uniquely up to isomorphism. Observe that $n = k + \sum n_i$ and $m =  \sum i n_i + \binom{k}{2}$ and that there are $n_0$ isolated vertices. Below in Fig 2 we have an example of such a graph with $G = H(5,1,1,0,2)$.

\newpage

\setlength{\unitlength}{0.5mm}
\begin{picture}(80,70)
\put(10,10){\line(1,0){70}}
\put(10,10){\line(0,1){25}}
\put(10,35){\line(1,0){70}}
\put(80,10){\line(0,1){25}}
\put(20,25){$K_5$}
\put(45,20){\circle*{3}}
\put(15,20){\circle*{3}}
\put(30,20){\circle*{3}}
\put(60,20){\circle*{3}}
\put(75,20){\circle*{3}}

\put(45,50){\circle*{3}}
\put(75,50){\circle*{3}}
\put(60,50){\circle*{3}}
\put(90,50){\circle*{3}}

\put(45,20){\line(0,1){30}}
\put(60,20){\line(0,1){30}}
\put(75,20){\line(0,1){30}}

\put(45,20){\line(1,2){15}}

\put(60,20){\line(-1,2){15}}
\put(75,20){\line(-1,2){15}}

\put(75,20){\line(-1,1){30}}

\end{picture}

\textbf{Fig. 2.} Graph $H(5,1,1,0,2)$.

This graph is a special split graph in that the neighbourhoods of the vertices in the independent set are nested in the clique. It is equivalent to say that the neighbourhoods of the vertices in the clique are nested in the independent set. Note that the complement of a split graph is also a split graph and that indeed the complement of a fully compressed graph is also a fully compressed graph.

\begin{theorem}\label{thm:compressed}
The set  of all graphs of the form $H(k, n_0, ... , n_{k-1})$ is precisely the set of fully compressed graphs.
\end{theorem}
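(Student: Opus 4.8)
The plan is to prove the two inclusions separately, using throughout the criterion that a compression $C_{xy}(G)$ is isomorphic to $G$ exactly when the sets $\mathcal{N}(x) \setminus \{y\}$ and $\mathcal{N}(y) \setminus \{x\}$ are comparable under inclusion (one contains the other). This follows from the definition of compression: if $\mathcal{N}(x)\setminus\{y\} \subseteq \mathcal{N}(y)$ then no edge at $x$ can be moved and $C_{xy}(G)=G$, while if the reverse nesting holds the result is isomorphic to $G$ via the relabelling $C_{xy}\leftrightarrow C_{yx}$; if neither nesting holds then both $x$ and $y$ have a private neighbour, so edges genuinely spread, $|d(x)-d(y)|$ strictly increases, and Lemma \ref{lem:comp_a_link_line} makes the result non-isomorphic. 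Consequently $G$ is fully compressed if and only if every pair of vertices has nested neighbourhoods.

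First I would show each $H(k,n_0,\ldots,n_{k-1})$ is fully compressed by checking that every pair of vertices has nested neighbourhoods, splitting into three cases. For two vertices of the independent set this is immediate from the definition of $H$. For an independent-set vertex $w$ and a clique vertex $c$ one has $\mathcal{N}(w) \setminus \{c\} \subseteq K_k \setminus \{c\} \subseteq \mathcal{N}(c) \setminus \{w\}$, since $\mathcal{N}(w)\subseteq K_k$ and $c$ is adjacent to all other clique vertices. For two clique vertices the only possible difference lies in their neighbours inside the independent set, and these are nested by the equivalent ``clique-side'' formulation noted after the definition of $H$. Hence no non-isomorphic compression applies and $H(k,n_0,\ldots,n_{k-1})$ is fully compressed.

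For the converse, suppose $G$ is fully compressed, so all neighbourhoods are pairwise nested. I would first rule out the induced subgraphs $2K_2$, $C_4$ and $P_4$: in each case one exhibits a pair of vertices with incomparable neighbourhoods, contradicting full compression. For an induced $2K_2$ on edges $ab,cd$ take the pair $a,c$, whose neighbourhoods contain $b$ and $d$ respectively but neither contains the other's; for an induced $C_4 = a\text{-}b\text{-}c\text{-}d\text{-}a$ take the adjacent pair $a,b$; and for an induced $P_4 = a\text{-}b\text{-}c\text{-}d$ take the endpoints $a,d$. This makes $G$ a split graph in which the vicinal (neighbourhood-containment) relation is total, and ordering the independent-set vertices by this relation exhibits their neighbourhoods as nested subsets of the clique, identifying $G$ as some $H(k,n_0,\ldots,n_{k-1})$ once the parameters are read off.

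The main obstacle is the final extraction step: promoting the pairwise (local) nesting condition to the global split decomposition and the single nested chain of neighbourhoods. Concretely, one must show that comparability of every pair yields a genuine total preorder, whose transitivity requires a careful case analysis on whether the ``witness'' vertex coincides with one of the three vertices involved, and then choose the clique and independent-set partition consistently, for instance by taking a maximum clique and verifying, via the absence of $2K_2$ and $C_4$, that the remaining vertices form an independent set whose neighbourhoods nest. Establishing the transitivity of the preorder and the consistency of this partition is where the real content lies; once they are in place, identifying $k$ and $n_0,\ldots,n_{k-1}$ is routine.
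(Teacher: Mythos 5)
Your strategy is genuinely different from the paper's. You reduce everything to the criterion that $C_{xy}(G)\cong G$ exactly when $\mathcal{N}(x)\setminus\{y\}$ and $\mathcal{N}(y)\setminus\{x\}$ are comparable under inclusion, i.e.\ that the fully compressed graphs are those with a total vicinal preorder (the threshold graphs), and then you take the forbidden-induced-subgraph route ($2K_2$, $C_4$, $P_4$) to recover the split-plus-nested structure. The criterion itself is sound: when neither inclusion holds, $y$ gains a private neighbour of $x$ while keeping its own private neighbour and $x$ loses one, so the new degree pair is strictly more spread than the old one with the same sum, and the convexity argument of Lemma \ref{lem:comp_a_link_line} forces non-isomorphism. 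Your verification that every $H(k,n_0,\dots,n_{k-1})$ has pairwise comparable neighbourhoods is also complete. The paper instead argues directly on the graph: it fixes a largest clique with maximal degree sum, shows every edge must meet it (else compressing an endpoint into the clique is non-isomorphic), and reads off the nesting among the remaining vertices.

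The gap is in the converse direction, and you flag it yourself: the passage from ``every pair of vertices has comparable neighbourhoods'' to ``$G$ is a split graph whose independent-set neighbourhoods form a chain inside the clique'' is declared to be ``where the real content lies'' but is not carried out. As written, the sentence ``this makes $G$ a split graph'' silently invokes the Foldes--Hammer characterisation (split $\Leftrightarrow$ no induced $2K_2$, $C_4$, $C_5$), which is neither proved here nor available in the paper. Two remarks on closing it. First, the transitivity worry is a red herring for the nesting claim: once an independent set $I$ is fixed, for non-adjacent $u,v\in I$ the deleted singletons are irrelevant and $\mathcal{N}(u)$, $\mathcal{N}(v)$ are directly comparable, and a family of sets that is pairwise comparable under $\subseteq$ is automatically a chain, so no separate transitivity argument is needed. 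Second, the genuinely missing piece is therefore only the existence of the clique/independent-set partition itself (plus the bookkeeping that a maximum clique absorbs any outside vertex adjacent to all of it, so the outside degrees lie in $\{0,\dots,k-1\}$). To supply it you must either prove the split characterisation or reproduce the paper's maximum-clique compression argument, so the forbidden-subgraph detour does not actually save you that work.
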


\begin{proof}

We first show that every graph in $\mathcal{H} = H(k,n_0,n_1,...,n_{k-1})$ is fully compressed. Any compression in $\mathcal{H}$ to a vertex in the complete subgraph does not change the graph. Any compression from a vertex in the complete subgraph to another vertex not in the complete subgraph gives us an isomorphic graph. Any compression from a vertex not in the complete subgraph to one not in the complete subgraph either does not change the graph, or gives us an isomorphic graph. So indeed every graph in $\mathcal{H}$ is fully compressed.

Now suppose we have any fully compressed graph $G$. By definition $G$ gives an isomorphic graph under any compression. It follows that there can only be one component of $G$ that has any edges, else any compression from any component to any other component reduces the number of components by one and we obtain a non-isomorphic graph. Now we pick a largest clique in $G$, say of size $k$, with the highest sum of degrees of its vertices. There can be no edge in $G$ that is not connected directly to this clique. If there were such an edge, then simply by compressing one end-point of this edge into the clique (this is possible - else we would have started with a larger clique) gives us a higher sum of degrees in this clique, which can not be isomorphic to our original $G$. Suppose now that the neighbourhoods of two points not in this clique are not nested. Then a compression from one to the other will give us a non-isomorphic graph, so indeed the neighbourhoods are nested. This completes the structure of graphs in $\mathcal{H}$.
\end{proof}

\begin{corollary}\label{cor:H_maximises}
The graph $G$ with $n$ vertices and $m$ edges that maximises the number of edges of the line graph of $G$ is in the set $\mathcal{H}$.
\end{corollary}

\begin{proof}
By Lemma \ref{lem:comp_a_link_line} we know that compression increases the number of edges of the line graph, and every graph can be compressed to a graph in $\mathcal{H}$ by Theorem \ref{thm:compressed}. 
\end{proof}

\section{Consolidation} \label{sec:compression_b}

We have thus far shown that repeatedly using the compression move, results in a graph in the set $\mathcal{H}$. We now introduce a second move, a \emph{consolidation move}, which applies only to graphs in $\mathcal{H}$. This move will allow us to consolidate further to obtain a single graph with $n$ vertices and $m$ edges.

\begin{definition}\label{def:comp_b}
A \emph{consolidation} on a graph $G = H(k,n_0,n_1,...,n_{k-1}) \in \mathcal{H}$ is the following move: If $\sum _{i=1} ^{k-1} n_i \leq 1$ then consolidation does nothing. Otherwise take a vertex $x$ of smallest non-zero degree $s$, and remove an edge connected to this vertex. Take a vertex $y$ of largest degree smaller than $k$, $l$, and add an edge connected to this vertex and the clique of size $k$ in $G$.
\end{definition}

Formally consolidation acts as follows on $H(k,n_0,n_1,...,n_{k-1}) \in \mathcal{H}$ if $\sum _{i=1} ^{k-1} n_i \geq 2$ for $i = 1,...,k-1$:

\begin{equation*}
  n_i \leftarrow \begin{cases}
    n_i + 1, & \text{if $i = s-1$ or $i = l+1$}.\\
    n_i - 1, & \text{if $i=s$ or $i=l$}.\\
    n_i, & \text{otherwise}.
  \end{cases}
\end{equation*}
If $l = k-1$ then $k \leftarrow k+1$ and we introduce $n_k = 0$.

This move is well-defined, as graphs in $\mathcal{H}$ are defined by the numbers $k, n_o,...,n_{k-1}$ alone. It takes a graph in $\mathcal{H}$ and maps it to a graph in $\mathcal{H}$. Consider the simple example $G=H(5,0,0,1,2,0)$. After one consolidation move we will have $H(5,0,1,0,1,1)$. The next move gives $H(6,1,0,0,1,0,0)$. This final graph is not affected by the consolidation move as $\sum _{i=1} ^{5} n_i = 1$.

\begin{lemma} \label{lem:consolidation_terminates}
We cannot repeatedly apply consolidation without obtaining isomorphic graphs. The only graphs which are not affected by consolidation are those with at most one vertex connected to the clique.
\end{lemma}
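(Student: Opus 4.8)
The plan is to treat the lemma as two separate assertions: that consolidation terminates (it cannot be iterated indefinitely while producing non-isomorphic graphs), and that its fixed graphs are exactly those with at most one vertex attached to the clique. The second assertion is almost a restatement of Definition \ref{def:comp_b}, which makes consolidation the identity precisely when $\sum_{i=1}^{k-1} n_i \le 1$. So here the only real task is to match the combinatorial condition to the graph-theoretic one: in any $G = H(k,n_0,\dots,n_{k-1})$ every non-clique vertex has all of its neighbours inside $K_k$ and therefore degree at most $k-1$, so the vertices ``connected to the clique'' are exactly the $\sum_{i=1}^{k-1} n_i$ vertices of nonzero degree, while the $n_0$ isolated vertices and the clique vertices themselves are not counted. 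Thus ``at most one vertex connected to the clique'' is literally the condition $\sum_{i=1}^{k-1}n_i\le 1$ under which the move does nothing. Conversely, when $\sum_{i=1}^{k-1}n_i\ge 2$ the chosen vertices $x$ and $y$ are distinct, the parameter vector genuinely changes, and since (as noted after the definition of $H(\cdots)$) graphs in $\mathcal H$ are determined by the numbers $k,n_0,\dots,n_{k-1}$ alone, the result is non-isomorphic.

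For termination the difficulty, which I expect to be the main obstacle, is that the obvious potentials are not strictly monotone. The count $\sum_{i\ge 1}n_i$ of clique-attached vertices can stay constant across a move, and even $e_{L(G)}=\frac12\sum_{v}\binom{d(v)}{2}$ is only non-decreasing: on a move that does not enlarge the clique it can remain unchanged, since once the clique-vertex degrees are included the before-and-after degree sequences need not be comparable in majorization. Recognizing that no single Schur-convex function of the whole degree sequence works is the crux; once that is understood, the remedy is a two-level (lexicographic) invariant. First I would observe that consolidation never decreases the clique size $k$ and that $k\le n$, so $k$ increases only finitely often, and that it increases exactly on the moves with $l=k-1$ (when the vertex $y$ reaches degree $k$ and joins the clique).

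Second, within any maximal run of moves during which $k$ is constant, I would track the potential $\Phi=\sum_{i=1}^{k-1}\binom{i}{2}n_i$, the sum of $\binom{d}{2}$ over the non-clique vertices. On such a move both $x$ and $y$ remain outside $K_k$, and the change in $\Phi$ is
\[
\left[\binom{s-1}{2}-\binom{s}{2}\right]+\left[\binom{l+1}{2}-\binom{l}{2}\right]=l-(s-1)=l-s+1\ge 1,
\]
using that the largest non-clique degree $l$ is at least the smallest positive non-clique degree $s$. Hence $\Phi$ strictly increases on every clique-preserving move while remaining bounded above by $\binom{k-1}{2}(n-k)$, so each constant-$k$ run is finite; since $k$ changes only finitely many times, consolidation terminates after finitely many non-isomorphic steps. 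Each such step is moreover genuinely non-isomorphic, certified either by $k$ strictly increasing or, when $k$ is fixed, by $\Phi$ strictly increasing. With the invariant $(k,\Phi)$ isolated, the inequality $l\ge s$ makes the monotonicity immediate, and both halves of the lemma follow.
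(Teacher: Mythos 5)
Your proof is correct, and it takes a genuinely different -- and in fact more careful -- route than the paper. The paper's own proof uses the single potential $\sum_{v\in V(G)}d(v)^2$ over \emph{all} vertices and asserts that ``only two vertices have a change in degree''; this overlooks the two clique endpoints involved (the edge removed at $x$ and the edge added at $y$ cannot meet the same clique vertex, since nesting gives $\mathcal{N}(x)\subseteq\mathcal{N}(y)$), and once their contributions are included the potential is not monotone -- the paper's own remark that consolidation decreases $e_{L(G)}=\tfrac12\sum_v\binom{d(v)}{2}$ for a star already exhibits a strict decrease of $\sum_v d(v)^2$. You correctly identify that no single Schur-convex function of the full degree sequence works, and your remedy -- the lexicographic invariant $(k,\Phi)$ with $\Phi=\sum_{i=1}^{k-1}\binom{i}{2}n_i$ taken only over non-clique vertices, with $k$ non-decreasing and increasing exactly when $l=k-1$, and $\Phi$ strictly increasing by $l-s+1\ge1$ on clique-preserving moves -- is sound and bounded, so termination follows; your treatment of the fixed points and of non-isomorphy via the change in the parameter vector matches the paper's ``follows from the definition.'' What your approach buys is a proof that actually closes the gap in the published argument; what it costs is the extra two-level bookkeeping. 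One small slip in your motivation: $e_{L(G)}$ is not merely ``non-decreasing, possibly constant'' under consolidation -- it can strictly decrease (the star example) -- but this error only strengthens your case for abandoning that potential and plays no role in your actual argument.
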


\begin{proof}
If $\sum _{i=1} ^{k-1} n_i \geq 2$ then $\sum_{x \in V(G)} d(x)^2$ is strictly increased by consolidation, as the sums of the degrees remain constant, and only two vertices have a change in degree. A pair of vertices have degrees $s$ and $l$ respectively before consolidation, and $s-1$ and $l+1$ afterwards. As $s \leq l$ and $f(x) = x^2$ is convex the sum of squares must strictly increase. $\sum_{x \in V(G)} d(x)^2$ is bounded for given $m$ and $n$ which completes the proof of the first statement. The second statement follows directly from the definition.
\end{proof}

It is in fact possible to obtain a partial order on the set $\mathcal{H}$ using consolidation. This order is similar to (can be extended to) the order that is lexicographic first on the number of isolated vertices and then on the smallest degree.

\subsection{The extremal graph} \label{sec:the_extremum}

Now that we have shown that consolidation also terminates, we are interested in the final graph. For $m = \binom{k}{2}$ we define $H_{n,m}:=H(k,n-k,0,...,0)$, otherwise $H_{n,m}:=H(k,n-k-1,...,0,1,0,...,0)$, where $k$ is determined by $\binom{k}{2} < m < \binom{k+1}{2}$, $d_{m - \binom{k}{2}}=1$ and $d_i = 0$ otherwise for $i>0$. $H_{n,m}$ has one clique, and one vertex (of degree $m - \binom{k}{2}$) connected only to the clique, and $n-k-1$ isolated vertices.

\section{Application to cliques, forests and acyclic orientations} \label{sec:other_parameters}

We have shown that the two transformations, compression followed by consolidation, move us to a unique graph with $n$ vertices and $m$ edges $H_{n,m}$. If for any invariant, we can show demonstrate monotonic behaviour with respect to both these moves, i.e. is increasing or decreasing, then the extremal value of that graph parameter is obtained by the graph $H_{n,m}$. If we can further show that the parameter is strictly increasing or strictly decreasing, then $H_{n,m}$ is the unique extremal graph. We consider the following parameters:

\begin{itemize}
\item the number of cliques of $G$
\item the number of forests of $G$
\item the number of acyclic orientations of $G$.
\end{itemize}

\subsection{Monotonicity for compression}

We have already shown that for the number of edges of the line graph compression is strictly monotone (in Lemma \ref{lem:comp_a_link_line}) and that the extremal graph must lie in $\mathcal{H}$ (see Corollary \ref{cor:H_maximises}). In Lemmas \ref{lem:compressiondoesnotdecreasecliques}, \ref{lem:compressiondoesnotdecreaseforests} and \ref{lem:compressionlemma} we show that compression is monotone in the number of cliques, Euler subgraphs and acyclic orientations respectively. The proof for the first two is quite short, but proving monotonicity for acyclic orientations is harder and requires several other lemmas.

\begin{lemma} \label{lem:compressiondoesnotdecreasecliques}
Compression cannot decrease the number of cliques in a graph. In fact the number of cliques of every size does not decrease.
\end{lemma}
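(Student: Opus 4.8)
The plan is to show that compression induces an injection from cliques of $G$ into cliques of $H := C_{xy}(G)$ that preserves clique size, which immediately gives the stronger statement that the number of cliques of every fixed size does not decrease. Fix the compression $C_{xy}$ moving edges from $x$ to $y$. Recall that a clique is a set of pairwise-adjacent vertices, and note that for any vertex $v \neq x, y$, the edge $\{v, x\}$ is either left in place or rerouted to $\{v, y\}$, while every edge not incident to $x$ is untouched. I would first dispose of the easy cases: any clique $Q$ of $G$ that does not contain $x$ survives verbatim in $H$, since none of its edges is incident to $x$ and compression never deletes a non-$x$-incident edge. So the only cliques needing attention are those containing $x$.

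The heart of the argument is to define the image of a clique $Q$ containing $x$. The natural candidate is to replace $x$ by $y$: set $Q' := (Q \setminus \{x\}) \cup \{y\}$. First I would check that $Q'$ is genuinely a clique in $H$. Let $v \in Q \setminus \{x, y\}$. Since $v$ was adjacent to $x$ in $G$, the edge $\{v, x\}$ either gets rerouted to $\{v, y\}$ in $H$ (so $v$ is adjacent to $y$ in $H$), or it stays put because $v$ was already adjacent to $y$ in $G$ (in which case that $G$-edge $\{v,y\}$ is untouched and still present in $H$); either way $v$ is adjacent to $y$ in $H$. Adjacencies among vertices of $Q \setminus \{x\}$ are unaffected by the compression, so they persist. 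Hence $Q'$ is a clique of $H$ of the same size as $Q$.

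The one subtlety is whether this map $Q \mapsto Q'$ is injective and whether it collides with the images of the $x$-free cliques. Two cliques containing $x$ map to the same $Q'$ only if they agree off $x$, hence are equal, so the map is injective on $x$-containing cliques. For the potential collision between an $x$-free clique $Q_0$ (mapped to itself) and some $Q' = (Q\setminus\{x\})\cup\{y\}$: if these coincided then $Q_0 \ni y$ and $Q_0 = Q'$ would both be cliques of $H$, but I should argue they cannot double-count a clique of $G$. The clean way is to observe that the map sending each clique of $G$ to a clique of $H$ — identity on $x$-free cliques, and $x \mapsto y$ on the rest — is injective because distinct cliques of $G$ have distinct images: the only way an $x$-free clique $Q_0$ and an $x$-containing clique $Q$ collide is $Q_0 = Q'$, which forces $Q = (Q_0 \setminus \{y\}) \cup \{x\}$ to be recoverable, and I would verify that $Q_0$ and $Q$ cannot both be cliques of $G$ simultaneously in that degenerate case, or otherwise refine the image choice.

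The main obstacle is exactly this last injectivity/over-counting point: it is conceivable that $(Q\setminus\{x\})\cup\{y\}$ already appears as a clique of $G$ (hence is counted once on the left via that clique, but would be hit twice on the right). I expect to resolve this by treating $x$ and $y$ symmetrically or by a careful case split on whether $y \in Q$, and I anticipate that the cleanest formulation tracks cliques through the edge-rerouting rule directly rather than via the set-level image map; verifying that no clique of $H$ is the image of two distinct cliques of $G$ is where the real care is needed, whereas size-preservation and the clique property of $Q'$ are routine once the rerouting rule is unpacked.
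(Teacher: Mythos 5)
Your overall strategy (a size-preserving injection from cliques of $G$ to cliques of $H=C_{xy}(G)$, obtained by substituting $y$ for $x$) is the same as the paper's, and your verification that $Q'=(Q\setminus\{x\})\cup\{y\}$ is a clique of $H$ is fine. But the injectivity issue you flag at the end is a genuine gap, not a routine loose end, and the specific way you hope to close it does not work. You suggest verifying that ``$Q_0$ and $Q$ cannot both be cliques of $G$ simultaneously,'' i.e.\ that an $x$-free clique and the $x$-clique obtained from it by swapping $y$ for $x$ cannot coexist in $G$. They can: take $x$ and $y$ non-adjacent and a vertex $a$ adjacent to both. Then $\{a,x\}$ and $\{a,y\}$ are both cliques of $G$, and your map sends both to $\{a,y\}$, so the map as defined is not injective. (A second, smaller defect: for a clique containing both $x$ and $y$, your formula gives $Q\setminus\{x\}$, which is strictly smaller, breaking size preservation; such cliques are in fact untouched by compression and must be mapped to themselves.)

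The correct repair is your ``otherwise refine the image choice'' branch, and it is exactly what the paper does: only remap a clique $Q\ni x$ (with $y\notin Q$) to $(Q\setminus\{x\})\cup\{y\}$ when some vertex of $Q\setminus\{x\}$ is \emph{not} adjacent to $y$ in $G$ --- i.e.\ precisely when the compression actually moves one of $Q$'s edges and destroys $Q$. If every vertex of $Q\setminus\{x\}$ is already adjacent to $y$, none of $Q$'s edges is moved, $Q$ survives in $H$, and it should be mapped to itself. With this refinement the collision disappears for a structural reason: a remapped image $(Q\setminus\{x\})\cup\{y\}$ contains a vertex not adjacent to $y$ in $G$, so it is never a clique of $G$ and hence never the fixed image of an $x$-free clique. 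In the example above, $\{a,x\}$ falls into the ``unaffected'' case and is mapped to itself, restoring injectivity. Without this case split your argument does not go through, so as written the proof is incomplete at its crucial step.
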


\begin{proof}
Take a graph $G$ and vertices  $x,y \in G$. It is sufficient to find an injection from the set of cliques of $G$ to the set of cliques of $C_{xy}(G)$. The only cliques in $G$ which can be affected by compression are those that contain $x$ and not $y$. Take a clique consisting of the vertices $(a_1, ..., a_k, x)$ say. If $(a_i, y)$ is an edge for all $a_i$, then this clique is unaffected by compression. Otherwise some edge $(a_i, x)$ in $G$ is mapped to $(a_i, y)$ by compression and all vertices $a_i$ are connected to $y$ after compression. Thus we can map the original clique in $G$ to the clique containing the vertices $(a_1,...,a_k,y)$ in $C_{xy}(G)$. We map each other clique to itself. No two cliques get mapped to the same clique. Hence we have an injection and we are done.
\end{proof}

\begin{definition}
Given a graph $G$ and two vertices $x,y \in G$ we let $E^{xy}(G) = \{ (u, x)| u \in \mathcal{N}(x) \backslash \mathcal{N}(y) \}$.
\end{definition}

The edges $E^{xy}(G)$ are precisely the edges that get moved by the compression $C_{xy}$. Thus any cycle in $G$ that gets destroyed by compression must go through such an edge $(x,u)$.

\begin{lemma} \label{lem:samedirectionlemma}
Given an oriented graph $G$ and two vertices $x,y \in G$. For every vertex $u \in E^{xy}(G)$ for which $(u,x)$ is contained in a cycle in $G$ but $(u,y)$ is not contained in a cycle in $C_{xy}(G)$, there exists a vertex $v$ such that $uxvy$ forms a directed path in $G$.

\end{lemma}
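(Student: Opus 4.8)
The plan is to exploit the defining feature of compression: $C_{xy}$ only ever relocates edges incident to $x$ (moving each such edge to $y$ when possible), so any directed path of $G$ that avoids $x$ survives unchanged in $C_{xy}(G)$. I would take the orientation of the edge between $u$ and $x$ to be $u \to x$, which is the orientation under which the asserted path $u \to x \to v \to y$ is meaningful; the reverse orientation is handled by the same argument applied after reversing all arc directions, since arc-reversal commutes with compression and carries directed cycles to directed cycles. Since $(u,x)$ lies on a directed cycle of $G$, deleting the arc $u \to x$ from that cycle leaves a simple directed path $P \colon x = p_0 \to p_1 \to \cdots \to p_\ell = u$ from $x$ to $u$ whose internal vertices are all distinct from $x$ and from $u$. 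In particular every arc $p_i \to p_{i+1}$ with $i \geq 1$ is not incident to $x$ and is therefore untouched by compression.

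The core of the argument is to examine the first vertex $p_1$ of $P$ relative to $\mathcal{N}(y)$, noting that after compression the arc $u \to x$ has become $u \to y$. I would rule out three configurations, each by producing a directed path from $y$ to $u$ in $C_{xy}(G)$, which together with the arc $u \to y$ would place $(u,y)$ on a directed cycle and contradict the hypothesis. First, if $p_1 \notin \mathcal{N}(y)$ and $p_1 \neq y$, then compression sends the arc $x \to p_1$ to $y \to p_1$, giving the path $y \to p_1 \to \cdots \to u$. Second, if $p_1 = y$, then the surviving tail $y \to p_2 \to \cdots \to u$ is already such a path. Third, if $p_1 \in \mathcal{N}(y)$ with the edge between $p_1$ and $y$ oriented $y \to p_1$, then $y \to p_1 \to \cdots \to u$ is again such a path, since the arc $y \to p_1$ is incident to $y$ rather than $x$ and hence unchanged. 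In each case $(u,y)$ would lie on a cycle, contrary to assumption.

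Having excluded these three configurations, the only surviving possibility is $p_1 \in \mathcal{N}(y)$ with $p_1 \neq y$ and the edge between $p_1$ and $y$ oriented $p_1 \to y$. Then $G$ contains the arcs $u \to x$, $x \to p_1$ and $p_1 \to y$, so setting $v := p_1$ yields the directed path $u \to x \to v \to y$. I would close by confirming that $u, x, v, y$ are genuinely distinct: $p_1 \neq x$ because $P$ is simple, $p_1 \neq u$ because $p_1 \in \mathcal{N}(y)$ while $u \in \mathcal{N}(x) \setminus \mathcal{N}(y)$, and $y$ differs from each of $u, x, p_1$ for the same reasons. The main obstacle I anticipate is the bookkeeping in the second paragraph: one must verify precisely which edge of $P$ can be relocated by $C_{xy}$ (only the first, $x \to p_1$) and that the tail is preserved, and then keep the three cases for $p_1$ straight against the compression rule so that each genuinely forces a $y$-to-$u$ path. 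The orientation convention is the only other delicate point, and it is resolved by the arc-reversal symmetry noted at the outset.
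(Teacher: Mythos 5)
Your proof is correct and follows essentially the same route as the paper: it identifies the vertex immediately after $x$ on the cycle (your $p_1$, the paper's $v$) and rules out the configurations in which the cycle would survive compression as a cycle through $(u,y)$, forcing $p_1 \to y$. You are somewhat more careful than the paper, explicitly handling the case $p_1 = y$ and the orientation convention on $(u,x)$, both of which the paper's argument glosses over.
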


\begin{proof}
Take such a vertex $u$. There is a cycle $uxvP$ for some vertex $v$ and path $P$, which is destroyed by compression. Suppose that the vertex $v$ is not connected to $y$ in $G$. Then the compression moves the edge $(x,v)$ as well as $(u,x)$ and the cycle is preserved. So $v$ must be connected to both $x$ and $y$. Furthermore, if $(y,v)$ has the same orientation as $(x,v)$, then $uyvP$ is a cycle in $C_{xy}(G)$ contradicting the property of $u$. So there must exist at least one vertex $v$ as described in the lemma.
\end{proof}

\begin{corollary}
The paths $xvy$ described in Lemma \ref{lem:samedirectionlemma} must all point in the same direction from $x$ to $y$ in $G$.
\end{corollary}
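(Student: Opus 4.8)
The plan is to argue by contradiction: I will suppose that two of the directed paths produced by Lemma \ref{lem:samedirectionlemma} point in opposite senses, and then splice them together to rebuild a cycle through a moved edge, contradicting the hypothesis that defines the vertices to which the lemma applies.

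First I would record what the lemma yields in each of the two orientations of the moved edge. If $u$ is a relevant vertex and the edge between $u$ and $x$ is oriented $u \to x$, then the proof of Lemma \ref{lem:samedirectionlemma} gives a vertex $v$ with $u \to x \to v \to y$, so its middle segment runs $x \to v \to y$; if the edge is instead oriented $x \to u$, the symmetric argument gives a vertex with $y \to v \to x \to u$, whose middle segment runs $y \to v \to x$. Hence "all pointing in the same direction from $x$ to $y$" is precisely the claim that these two situations cannot coexist. I would also isolate the two facts used below: compression changes only edges incident to $x$, and for any relevant $u$ the destroyed cycle cannot pass through $y$, since otherwise the stretch from $y$ back to $u$ survives compression and already puts the moved edge $(u,y)$ on a cycle in $C_{xy}(G)$.

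Next, suppose for contradiction that a relevant $u_1$ gives the segment $x \to v_1 \to y$ while a relevant $u_2$ gives the segment $y \to v_2 \to x$. I would then exhibit a directed cycle through the moved edge $u_1 \to y$ in $C_{xy}(G)$, namely
\[ u_1 \to y \to v_2 \to x \to v_1 \leadsto u_1, \]
where $v_1 \leadsto u_1$ is the portion of $u_1$'s original cycle running from $v_1$ back to $u_1$. Every edge here survives compression: $u_1 \to y$ is the moved image of $u_1 \to x$; since $v_1,v_2 \in \mathcal{N}(y)$, compression moves neither an edge joining $x$ to a neighbour of $y$ nor any edge not incident to $x$, so $y \to v_2$, $v_2 \to x$ and $x \to v_1$ persist; and the stretch $v_1 \leadsto u_1$ uses no edge incident to $x$ and, by the remark above, avoids $y$. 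Reducing this closed walk to a simple directed cycle yields a cycle through $(u_1,y)$, contradicting the defining property of $u_1$.

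The main obstacle I expect is bookkeeping of degenerate overlaps rather than any substantial new idea. I must ensure the reduced simple cycle still contains the edge $u_1 \to y$, which is why I first rule out $y$ lying on $u_1$'s original cycle and note that $v_1 \neq v_2$ (the edge $\{x,v\}$ cannot be simultaneously oriented $x \to v_1$ and $v_2 \to x$ when $v_1=v_2$). With these coincidences handled, a simple directed path from $y$ to $u_1$ extracted from the walk closes up with $u_1 \to y$ into the desired cycle, giving the contradiction and hence the corollary.
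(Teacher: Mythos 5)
Your proof is correct, and it starts from the same configuration as the paper's: assume two oppositely directed middle segments $x\to v_1\to y$ and $y\to v_2\to x$ and observe that together they form the directed $4$-cycle $x v_1 y v_2$, all of whose edges survive compression because $v_1,v_2\in\mathcal{N}(y)$. The paper stops there and simply declares ``this is a contradiction''; read literally that is not yet a contradiction in the setting of Lemma \ref{lem:samedirectionlemma}, where neither $G$ nor $C_{xy}(G)$ is assumed acyclic (the one-liner only works if one is implicitly in the later situation of Theorem \ref{lem:compressionlemma}, where $C_{xy}(\theta)$ is acyclic and a surviving cycle is immediately fatal). You instead finish the argument against the actual hypothesis on $u_1$: you splice the moved edge $u_1\to y$, the surviving edges $y\to v_2\to x\to v_1$, and the tail $v_1\leadsto u_1$ of $u_1$'s original cycle into a closed walk in $C_{xy}(G)$ through $(u_1,y)$, extract a simple cycle, and contradict the assumption that $(u_1,y)$ lies on no cycle of $C_{xy}(G)$. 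Your preliminary observations are also sound and necessary for this: the tail avoids $x$ (the original cycle is simple) and avoids $y$ (otherwise $(u_1,y)$ would already lie on a surviving cycle), so it is untouched by compression. In short, your route is the same in spirit but strictly more careful; it proves the corollary in the generality in which it is stated, whereas the paper's two-line proof leaves the decisive step implicit.
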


\begin{proof}
Suppose we have two paths $xv_1y$ and $yv_2x$ in those directions. Then we have the cycle $xv_1yv_2$. This is a contradiction.
\end{proof}

\begin{lemma} \label{lem:compressiondoesnotdecreaseforests}
Compression $C_{xy}$ does not decrease the number of forests of $G$.
\end{lemma}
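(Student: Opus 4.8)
The plan is to mimic the injective argument used for cliques in Lemma~\ref{lem:compressiondoesnotdecreasecliques}: I will exhibit an injection from the forests (acyclic edge-subsets) of $G$ into the forests of $H := C_{xy}(G)$. The starting point is that compression induces a bijection $\phi$ on edge-sets: each edge $(u,x)$ with $u \in \mathcal{N}(x)\setminus\mathcal{N}(y)$ is relabelled $(u,y)$ and every other edge is fixed. Because any such moved endpoint satisfies $u \notin \mathcal{N}(y)$ in $G$, no two edges collide, so $\phi : 2^{E(G)} \to 2^{E(H)}$ is a cardinality-preserving bijection. It therefore suffices to understand exactly when $\phi$ carries a forest of $G$ to a forest of $H$, and to repair the cases where it does not.

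First I would characterise the ``good'' forests, those $F$ for which $\phi(F)$ is again acyclic. Writing $A = F \cap E^{xy}(G)$ for the moved edges of $F$, deleting $A$ from the forest $F$ detaches, at each moved neighbour $u_i$, the subtree $S_i$ that hung off $x$ through $u_i$; since $F$ is acyclic these subtrees are pairwise disjoint. Re-attaching the edges as $(u_i,y)$ then merges each $S_i$ into the component of $y$, and a cycle is produced precisely when $y$ already lies in one of the $S_i$ --- equivalently, when the $x$--$y$ path in $F$ begins with a moved edge $(x,u_j)$. A short Betti-number count (comparing the number of components before and after, so that the cycle rank equals the change in component count) shows that in this ``bad'' case exactly one cycle appears, running through the relocated edge $(y,u_j)$ and the former $u_j$--$y$ path (which, since $u_j \notin \mathcal{N}(y)$, has length at least two). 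On the good forests the rule $F \mapsto \phi(F)$ is already an injection into the forests of $H$.

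It remains to inject the bad forests, and this is where the real work lies. For a bad $F$ the natural repair is to break its unique cycle by a swap: delete the relocated cycle-edge $(y,u_j)$ and re-insert a compensating edge incident to $x$ --- the edge $(x,y)$ when it is present in $G$ (it cannot already lie in $F$, as the $x$--$y$ path there has length at least two), and otherwise an edge chosen canonically from the broken path --- so as to land on a forest of $H$ of the same size from which $F$ can be recovered. I expect the main obstacle to be proving that this correction is simultaneously \emph{canonical} and \emph{collision-free}: one must guarantee that distinct bad forests receive distinct images and that no bad image coincides with a good image $\phi(F')$. The cleanest way to control this is to exploit the symmetry $C_{xy}(G) \cong C_{yx}(G)$ together with the complement identity already recorded: run the same analysis for the reverse compression to identify the forests of $H$ that are \emph{not} images of good forests as exactly the symmetric ``bad'' family, and then match the two bad families by a bijection read off from their unique cycles. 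Establishing that this matching is a genuine bijection --- equivalently, that the number of bad forests of $G$ does not exceed the number of forests of $H$ lying outside $\phi(\text{good})$ --- is the crux of the proof.
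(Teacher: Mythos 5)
Your framework is the same as the paper's: regard compression as a size-preserving bijection $\phi$ on edge sets, note that it carries forests to forests except when the image acquires a cycle, and repair those exceptional forests by a one-edge swap. Your structural analysis of the ``bad'' forests is correct and matches the paper's setup: the image of a bad forest $F$ contains exactly one cycle, that cycle uses exactly one relocated edge $(u_j,y)$, and $F$ is bad precisely when the unique $x$--$y$ path in $F$ begins with a moved edge $(x,u_j)$; on the good forests $\phi$ is indeed an injection. The paper's proof likewise repairs each offending forest $H$ by a single swap, but it swaps an edge that compression does \emph{not} move, replacing $(x,v)$ by $(y,v)$ for a vertex $v$ with $(x,v)\in E(H)$ and $(v,y)\in E(G)\setminus E(H)$, thereby staying among subgraphs of $G$.

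The gap is exactly where you place it, and your proposed repair in the main case provably fails rather than merely being unverified. Suppose $(x,y)\in E(G)$, let $A=F\cap E^{xy}(G)$, and let $R=\phi(F)-(u_j,y)+(x,y)$ be your repaired forest. Put $F'=F-(x,u_j)+(x,y)$. Since the unique $x$--$y$ path in $F$ uses $(x,u_j)$, deleting that edge separates $x$ from $y$, so $F'$ is a forest of $G$; its moved edges are $A\setminus\{(x,u_j)\}$, and since $\phi$ fixes $(x,y)$ one checks directly that $\phi(F')=(F\setminus A)\cup\{(u_i,y):i\neq j\}\cup\{(x,y)\}=R$. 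Thus $R$ is simultaneously your repaired image of the bad forest $F$ and the plain image of the good forest $F'$: the combined map \emph{always} collides in this case, so it is never injective. The fallback case $(x,y)\notin E(G)$ is not defined at all (``an edge chosen canonically from the broken path'' cannot be inserted, since the path's edges already lie in the image), and the symmetry rescue does not work either: $C_{yx}(C_{xy}(G))\neq G$ in general, because compressing back from $y$ to $x$ also moves original edges $(w,y)$ with $w\in\mathcal{N}_G(y)\setminus\mathcal{N}_G(x)$, so the edge bijection attached to the reverse compression is not $\phi^{-1}$ and its ``bad family'' is not the complement of the good images among forests of $C_{xy}(G)$. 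What your construction never reaches is a swap immune to the collision above; the paper achieves this by exchanging the unmoved edge $(x,v)$ for $(y,v)$, pairing each bad forest against a subgraph of $G$ in the compensating class, rather than inserting an edge at $x$ that a good forest's image can equally well contain.
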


\begin{proof}
Pick a subgraph $H \in G$ such that $H$ is a forest and $C_{xy}(H) \in C_{xy}(G)$ contains a cycle. For each such $H$ we need only find a corresponding $H'$ such that $H'$ contains a cycle and $C_{xy}(H')$ is a forest to show that the number of forests does not decrease.

Such a graph $H$ contains a unique cycle $uxvP$ with $(u,y) \notin E(G)$ and $(v,y) \notin E(H)$, but $(v,y) \in E(G)$. The vertices $u$ and $v$ exist and are unique, else there would be a cycle in $C_{xy}(H)$. Now let $H' = H - (x,v) + (y,v)$, thus $C_{xy}(H') = C_{xy}(H) - (x,v) + (y,v) \in C{xy}(G)$. $H'$ contains no cycle, if it did, then $C_{xy}(H)$ would contain a cycle. $C_{xy}(H')$ contains the cycle $uyvP$ so we are done.
\end{proof}

We now show that a compression does not increase the number of acyclic orientations. The proof requires some lemmas, which aid the construction of an orientation used in the proof of the main result, the Compression Theorem for acyclic orientations Theorem \ref{lem:compressionlemma}.

\begin{definition}
A \emph{path switching} $P_{ab}(G)$ in an acyclic graph $G$ is the reversal of direction on every edge that is on a directed path between $a$ and $b$.

\end{definition}

\begin{lemma}[Path Switching Lemma] \label{lem:pathswitchinglemma}
For a given acyclic graph $G$ and any two vertices $a,b \in G$, $P_{ab}(G)$ is acyclic.
\end{lemma}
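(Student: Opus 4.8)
The plan is to reduce the operation to the reversal of the edges induced on a single vertex set and then to rule out cycles by a reachability argument. First I would dispose of the direction issue: since $G$ is acyclic there cannot be directed paths from $a$ to $b$ and from $b$ to $a$ simultaneously, so without loss of generality all directed paths between $a$ and $b$ run from $a$ to $b$ (if there are none, no edge is reversed and there is nothing to prove). Writing $A$ for the set of vertices reachable from $a$ and $B$ for the set of vertices from which $b$ is reachable, a vertex lies on a directed $a$--$b$ path exactly when it lies in $R := A \cap B$. I would then check that an edge lies on a directed $a$--$b$ path if and only if both of its endpoints lie in $R$: the forward direction is immediate, and the converse splices the witnessing paths $a \rightsquigarrow u$ and $v \rightsquigarrow b$ through the edge $(u,v)$, using acyclicity to rule out any repeated vertex. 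Hence $P_{ab}(G)$ is obtained from $G$ by reversing precisely the edges of the induced subgraph $G[R]$.

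Next I would pin down the edges crossing the boundary of $R$, since these are unchanged and control how any putative cycle may enter or leave $R$. If $(u,v)$ is an edge with $u \in R$ and $v \notin R$, then $v \in A$ (as $a \rightsquigarrow u \to v$) but $v \notin B$, so $v \in A \setminus B =: P$; dually, every edge entering $R$ from outside originates in $Q := B \setminus A$. Thus in $P_{ab}(G)$ one can leave $R$ only along an (unreversed) edge into $P$, and enter $R$ only along an (unreversed) edge from $Q$.

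Now suppose, for contradiction, that $P_{ab}(G)$ contains a directed cycle $C$. Since $G$ itself is acyclic, $C$ must traverse at least one reversed edge, and as all reversed edges lie inside $R$, the cycle meets $R$. If $C$ lies entirely within $R$, then reversing its orientation back yields a directed cycle in the acyclic graph $G[R]$, a contradiction. Otherwise $C$ leaves and re-enters $R$, so it contains a maximal segment lying in $V \setminus R$; by the boundary analysis this segment is a directed path of original edges starting at some $p \in P$ and ending at some $w \in Q$. But then $a \rightsquigarrow p \rightsquigarrow w \rightsquigarrow b$ forces $w \in A$, contradicting $w \in Q = B \setminus A$. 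Hence no such cycle exists and $P_{ab}(G)$ is acyclic.

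The main obstacle is the second step: correctly identifying the reversed edges as exactly the internal edges of $R$, and, more delicately, showing that boundary edges can only connect $R$ to $P$ (outgoing) and $Q$ to $R$ (incoming). Once this boundary structure is established the cycle argument is essentially forced, and the remaining risk lies only in the bookkeeping of where a cycle is permitted to cross the boundary of $R$.
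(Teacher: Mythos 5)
Your proof is correct, but it takes a genuinely different route from the paper's. The paper argues directly on a putative cycle in $P_{ab}(G)$: it decomposes the cycle into alternating switched arcs $Q_j$ and unswitched arcs $P_j$, extends each switched arc to a directed $a$--$b$ path $A_jQ_jB_j$ in $G$, and then splices pieces of these paths around an unswitched arc (forming $A_1P_1B_2$, or $A_1P_1B_1$ when there is only one switched arc) to conclude that $P_1$ also lies on a directed $a$--$b$ path and hence should have been switched --- a contradiction. You instead characterize the switched edge set structurally: with $A$ the set of vertices reachable from $a$, $B$ the set of vertices reaching $b$, and $R = A \cap B$, the reversed edges are exactly the edges of $G[R]$, and every boundary edge leaves $R$ into $P = A \setminus B$ or enters $R$ from $Q = B \setminus A$; a cycle is then killed either because $G[R]$ is acyclic (cycle inside $R$) or because an outside segment would make a vertex of $Q$ reachable from $a$ (cycle crossing the boundary). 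Your version buys explicitness: it handles the all-switched cycle as a separate clean case (the paper's alternating decomposition tacitly assumes both kinds of arcs occur), and the identification of the switched edges as $E(G[R])$, together with the $R\to P$ / $Q \to R$ boundary orientation, is a reusable structural fact rather than an ad hoc splicing step. The paper's version is shorter and needs no auxiliary sets, but it leaves more implicit (walks versus paths in the splices, and the degenerate segment configurations), which your boundary analysis makes mechanical.
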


\begin{proof}
Suppose otherwise. Then there is a graph $G$ which is acyclic, while $P_{ab}(G)$ contains a cycle. Some of the cycle must have a proper subpath which has been path-switched, else $G$ would not be acyclic. Hence the cycle can be viewed as a sequence of subpaths, $Q_1,P_1,Q_2,P_2,...$ in $G$, where the $Q_j$ are path switched to create a cycle in $P_{ab}(G)$ and the $P_j$ are not. In $G$ these path segments point in alternating directions, in $P_{ab}(G)$ they all point in the same direction, forwards say, to create a cycle. For every edge $e$ in a path segment $Q_j$ there must exist a path from one end to $a$, and from the other end to $b$. All of these paths must point in the same direction from $a$ to $b$, else there would be a cycle in $G$. Thus there exists a path in $G$ from $a$ to $b$ containing $Q_j$, $A_j Q_j B_j$ say for each $Q_j$, $q \ge 1$. If there exists a $Q_j$ for $j>1$, then we note that $A_1 P_1 B_2$ is a path in $G$ from $a$ to $b$ containing $P_1$, a contradiction since $P_1$ is not switched by $P_{ab}$. Thus, the cycle in $P_{ab}(G)$ is of the form $Q_1P_1$.  But then $A_1 P_1 B_1$ is a path from $a$ to $b$ containing $P_1$ providing the required contradiction.
\end{proof}

  We now use path switching and the set of edges $E^{xy}$ to define a map $D$.

\begin{definition}
Given a graph $G$ and two vertices $x,y \in G$ the map ${D \! \! : \:} \mathcal{O}(G) \to \mathcal{O}(C_{xy}(G))$ acts as follows: For $\theta$ such that $G - E^{xy}$ is acyclic, $D$ removes the edges moved by compression, applies path-switching and then puts the edges back in, i.e. \[ D_{xy}(G) = P_{xy}(G - E^{xy}) \cup E^{xy}. \]
Otherwise $D_{xy}(\theta) = \theta$.
\end{definition}

Note that $D$ is invertible and is its own inverse.

\begin{definition} \label{def:mapb}
The map ${B \! \! : \:} \mathcal{O}(G) \to \mathcal{O}(C_{xy}(G))$ acts as follows on orientations of $G$: For an orientation $\theta$ with a cycle in $G$ for which $C_{xy}(\theta)$ is acyclic, we define for $\theta ' = \theta$ and $\theta ' = D_{xy}(\theta)$, $B(\theta') = C_{xy}(D_{xy}(\theta'))$. For all other orientations we let $B(\theta) = C_{xy}(\theta)$.
\end{definition}

\begin{lemma} \label{lem:bisabijection}
For given $G$, $x,y \in G$ the map $B_{xy}$ is a bijection.
\end{lemma}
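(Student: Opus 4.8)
The plan is to establish that $B_{xy}$ is a bijection by exhibiting it as a well-defined self-inverse-like involution, or more carefully, by showing it is injective on a finite set mapping to a set of equal cardinality. Since $C_{xy}$ maps $\mathcal{O}(G)$ to $\mathcal{O}(C_{xy}(G))$ and both sets have the same cardinality (compression neither creates nor destroys edges, so both graphs have $m$ edges and hence $2^m$ orientations), it suffices to prove injectivity. First I would partition the orientations of $G$ according to the cases in Definition \ref{def:mapb}: the ``bad'' orientations $\theta$ for which $\theta$ has a cycle in $G$ but $C_{xy}(\theta)$ is acyclic, together with their $D_{xy}$-images, form one class on which $B$ acts by $\theta' \mapsto C_{xy}(D_{xy}(\theta'))$; all remaining orientations form the complementary class on which $B$ acts as the plain compression $C_{xy}$.

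Next I would verify that $B$ respects this partition, namely that the two classes map into corresponding classes in $\mathcal{O}(C_{xy}(G))$ without collision. The key structural fact is that $D_{xy}$ is its own inverse (noted after its definition) and that on the special class we are composing $C_{xy}$ with $D_{xy}$; I would check that $C_{xy} \circ D_{xy}$ is injective on the special orientations by using that $D_{xy}$ permutes the special class (path-switching in $G - E^{xy}$ preserves acyclicity of $G - E^{xy}$ by the Path Switching Lemma \ref{lem:pathswitchinglemma}, and re-adding $E^{xy}$ recreates exactly the cycle that distinguishes $\theta$ from $D_{xy}(\theta)$) while $C_{xy}$ is injective when restricted to orientations whose compression is acyclic. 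On the complementary class, plain $C_{xy}$ is injective because the only ambiguity compression could introduce is on the edges of $E^{xy}$, and an orientation not in the special class has a uniquely recoverable preimage under $C_{xy}$.

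The main obstacle I expect is proving that the images of the two classes are disjoint and that each class maps \emph{onto} the correct target class, so that no orientation in $\mathcal{O}(C_{xy}(G))$ is hit twice and none is missed. Concretely, the delicate point is that an orientation $\psi \in \mathcal{O}(C_{xy}(G))$ which is acyclic but whose $C_{xy}^{-1}$-preimage has a cycle (i.e.\ the target analogue of the special class) must receive exactly one preimage under $B$, and I must confirm this preimage comes from the $C_{xy}\circ D_{xy}$ branch rather than the plain $C_{xy}$ branch. I would handle this by arguing that $B$ swaps the roles of ``acyclic in $G$, cyclic after compression'' with ``cyclic in $G$, acyclic after compression,'' so that $B$ effectively pairs up the two problematic populations; since these two populations are the only ones where $C_{xy}$ fails to be a clean bijection, correcting them via $D_{xy}$ restores bijectivity overall. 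Once disjointness of images and surjectivity onto each piece are checked, injectivity on a finite domain mapping to an equinumerous codomain yields that $B_{xy}$ is a bijection.
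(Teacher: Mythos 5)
Your proposal is correct and takes essentially the same route as the paper: both arguments rest on the equicardinality $|\mathcal{O}(G)|=|\mathcal{O}(C_{xy}(G))|=2^m$ together with the observation that the special class $S$ in the definition of $B$ is closed under the involution $D_{xy}$, so that the image of $B$ coincides with the image of the edge-relabelling bijection $C_{xy}$. The paper proves surjectivity by pairing each $\theta$ with $D_{xy}(\theta)$, whereas you prove injectivity on the two $D_{xy}$-invariant classes; the disjointness of images that you flag as the main obstacle follows in one line from that same invariance, since $B(S)=C_{xy}(D_{xy}(S))=C_{xy}(S)$ while $B$ agrees with $C_{xy}$ off $S$.
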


\begin{proof}
$|\mathcal{O}(G)| = |\mathcal{O}(C_{xy}(G)|$, so it is sufficient to prove that the map $B$ is an surjection. Consider the orientations of $G$ in pairs, $\theta$ and $D_{xy}(\theta)$, allowing for cases where $\theta = D_{xy}(\theta)$. Then $B$ acts either as $C_{xy}$ or as $C_{xy} \circ D_{xy}$ on both orientations in the pair, by construction. Thus, since $D$ is its own inverse, $ \{ B(\theta), B(D_{xy}(\theta)\} = \{ C_{xy}(\theta), C_{xy}(D_{xy}(\theta))\}$. The map $ C_{xy}$ is a surjection and the image is the same as the image of $B$ so we have a surjection.
\end{proof}

\begin{corollary} \label{cor:binjsur}
The map $B$ is an injection of orientations with cycles to orientations with cycles, and a surjection of acyclic orientations to acyclic orientations.
\end{corollary}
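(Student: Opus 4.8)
The plan is to build on Lemma~\ref{lem:bisabijection}, which already gives that $B$ is a bijection from $\mathcal{O}(G)$ to $\mathcal{O}(C_{xy}(G))$. Since these two sets are equinumerous, the entire corollary reduces to the single inclusion that $B$ sends every cyclic orientation of $G$ to a cyclic orientation of $C_{xy}(G)$. Granting this, the injection of orientations with cycles to orientations with cycles is immediate, being a restriction of the injective map $B$. The surjection of acyclic orientations to acyclic orientations then follows formally: given an acyclic orientation $\sigma$ of $C_{xy}(G)$, its unique $B$-preimage $\theta$ cannot be cyclic, for otherwise $\sigma = B(\theta)$ would be cyclic; hence $\theta$ is acyclic, and $\sigma$ lies in the image of the acyclic orientations.

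The core of the proof is a claim about the involution $D_{xy}$. Let $S$ be the set of orientations that are cyclic in $G$ but acyclic after compression by $C_{xy}$, and let $T$ be the set of orientations that are acyclic in $G$ but cyclic after compression; these are exactly the orientations on which $C_{xy}$ fails to respect cyclicity. By the definition of $B$ (Definition~\ref{def:mapb}), we have $B = C_{xy}\circ D_{xy}$ on $S \cup D_{xy}(S)$ and $B = C_{xy}$ elsewhere. I claim that $D_{xy}$ maps $S$ into $T$. Assuming this, the case analysis closes at once. If $\theta$ is cyclic and lies in $S$, then $B(\theta) = C_{xy}(D_{xy}(\theta))$ with $D_{xy}(\theta) \in T$, so $B(\theta)$ is cyclic. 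And since every element of $D_{xy}(S) \subseteq T$ is acyclic, a cyclic $\theta \notin S$ cannot lie in $D_{xy}(S)$ either, so $B(\theta) = C_{xy}(\theta)$, which is cyclic because $\theta \notin S$. In both cases $B(\theta)$ is cyclic, as required.

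To establish the claim, fix $\theta \in S$. Edges outside $E^{xy}$ are fixed by compression, so any cycle of $\theta$ avoiding $E^{xy}$ would persist in the acyclic orientation $C_{xy}(\theta)$; hence every cycle of $\theta$ meets $E^{xy}$, the restriction of $\theta$ to $G - E^{xy}$ is acyclic, and $D_{xy}(\theta) = P_{xy}(\theta - E^{xy}) \cup E^{xy}$. The Path Switching Lemma (Lemma~\ref{lem:pathswitchinglemma}) gives that $P_{xy}(\theta - E^{xy})$ is acyclic, so the remaining task is to reinsert the moved edges. Here I would use Lemma~\ref{lem:samedirectionlemma} and its corollary: each cycle destroyed by compression runs through a moved edge and, by the lemma, along a directed path $uxvy$, with all such paths oriented the same way from $x$ to $y$. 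Path switching reverses exactly the directed $x$--$y$ paths, so reinserting the edges of $E^{xy}$ cannot complete a directed cycle, whence $D_{xy}(\theta)$ is acyclic; replacing each such $u\to x$ by $u\to y$ under compression then closes a directed cycle through the reversed path, so $C_{xy}(D_{xy}(\theta))$ is cyclic and $D_{xy}(\theta) \in T$.

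The main obstacle is precisely this last step, the bookkeeping of how path switching interacts with the moved edges $E^{xy}$. One must verify simultaneously that reinserting $E^{xy}$ into the acyclic graph $P_{xy}(\theta - E^{xy})$ creates no cycle in $G$, and that recompressing it does create one. Tracing a destroyed cycle $u\to x\to v\to\cdots$ through the switch and back through compression, and using the common orientation of the paths $xvy$ guaranteed by the corollary to Lemma~\ref{lem:samedirectionlemma}, is the delicate heart of the argument, whereas the reduction to this claim and the surjection statement are purely formal.
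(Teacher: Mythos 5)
Your formal skeleton is right, and in one respect sharper than the paper's. The paper disposes of this corollary in two lines, asserting that ``by the construction of $B$'' cyclic orientations map to cyclic orientations and then invoking Lemma~\ref{lem:bisabijection}; the real content is deferred to Theorem~\ref{lem:compressionlemma}. Your reduction to the single claim $D_{xy}(S)\subseteq T$ is exactly the right way to carry that burden, and the acyclicity half of your claim is genuinely load-bearing: if some $\theta\in S$ had $D_{xy}(\theta)$ cyclic, then $B$ would send the cyclic orientation $D_{xy}(\theta)$ to $C_{xy}(\theta)$, which is acyclic, and the whole argument would collapse. The paper never states this explicitly (its second case in the proof of Theorem~\ref{lem:compressionlemma} quietly needs it), so identifying it is a real contribution of your write-up.

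However, the claim $D_{xy}(S)\subseteq T$ is where all the difficulty lives, and your proposal does not prove it; you write ``I would use Lemma~\ref{lem:samedirectionlemma}'' and concede that the ``delicate heart'' is unverified. Two steps are missing. First, ``reinserting the edges of $E^{xy}$ cannot complete a directed cycle'': Lemma~\ref{lem:samedirectionlemma} and its corollary give, for each destroyed cycle through a single moved edge $(u,x)$, one directed path $uxvy$ and a common orientation of the middle legs $xvy$; they say nothing about a cycle that a reinserted edge might close using edges of $P_{xy}(\theta-E^{xy})$ that lay on no destroyed cycle, or using several reinserted edges at once, so acyclicity of $D_{xy}(\theta)$ does not follow from what you cite. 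Second, ``recompressing then closes a directed cycle'': the paper's actual argument must choose a specific vertex $q$ --- the last vertex of the destroyed cycle's return path lying on a directed $x$--$y$ path --- split that return path into $P^{qu}$ and $P^{qy}$, verify that path switching reverses the latter but not the former, and exhibit the explicit cycle $uyP^{yq}P^{qu}$ in $C_{xy}(D_{xy}(\theta))$. Your sketch replaces this tracking with an assertion. The reduction and the purely formal injection/surjection deductions are fine, but until the claim is proved the corollary is not.
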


\begin{proof}
By the construction of $B$ we have an injection of orientations with cycles to orientations with cycles, thus it follows from Lemma \ref{lem:bisabijection} that we must have a surjection of acyclic orientations to acyclic orientations.
\end{proof}

\begin{theorem}[Compression Theorem for acyclic orientations] \label{lem:compressionlemma}
For any graph $G$, and any $x, y \in V(G)$, applying $C_{xy}$ to $G$ cannot increase the number of acyclic orientations, i.e.

\[ a(G) \ge a(C_{xy}(G)).\]

\end{theorem}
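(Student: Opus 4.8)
The plan is to exploit the machinery built up in the preceding lemmas, specifically the map $B_{xy}$ together with Corollary \ref{cor:binjsur}. The key observation is that $a(G)$ counts the acyclic orientations of $G$, and we wish to compare this with $a(C_{xy}(G))$. Since $C_{xy}$ is a bijection on the underlying orientation sets (as $|\mathcal{O}(G)| = |\mathcal{O}(C_{xy}(G))|$), the question reduces entirely to controlling how many orientations \emph{switch status} — from acyclic to cyclic or vice versa — under the moves.

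First I would invoke Corollary \ref{cor:binjsur} directly. It states that $B$ is a surjection from the acyclic orientations of $C_{xy}(G)$ onto the acyclic orientations of $G$ — or, reading the statement carefully, that $B$ maps acyclic orientations onto acyclic orientations surjectively while injecting cyclic orientations into cyclic orientations. From a surjection of the acyclic orientations of one graph onto those of the other, I immediately read off an inequality between the two counts. The point is that $B$ sends every acyclic orientation of $G$ to an acyclic orientation of $C_{xy}(G)$ and does so injectively on cyclic orientations, so by the bijectivity of $B$ (Lemma \ref{lem:bisabijection}) and a counting argument the acyclic orientations of $C_{xy}(G)$ cannot outnumber those of $G$. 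Concretely, if $B$ injects $\mathcal{C}(G)$ into $\mathcal{C}(C_{xy}(G))$, then since $B$ is a bijection overall, the complementary sets — the acyclic orientations — must satisfy $a(G) \ge a(C_{xy}(G))$.

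The cleanest route is therefore a short counting argument: write $|\mathcal{O}(G)| = a(G) + |\mathcal{C}(G)|$ and similarly for $C_{xy}(G)$, use $|\mathcal{O}(G)| = |\mathcal{O}(C_{xy}(G))|$, and then apply the injection of cyclic orientations guaranteed by Corollary \ref{cor:binjsur} to conclude $|\mathcal{C}(G)| \le |\mathcal{C}(C_{xy}(G))|$, which rearranges to the desired inequality. I would state this in one or two lines.

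The main obstacle is not the counting — that is routine once the map $B$ is in hand — but rather being sure that the earlier corollary genuinely delivers the one-directional control I need. The substantive work has already been front-loaded into the construction of $B$, the verification that $D_{xy}$ is an involution, and the Path Switching Lemma, which together guarantee that $B$ behaves correctly on the delicate orientations (those with a cycle in $G$ whose compression is acyclic). So in writing the proof I would be careful to cite Corollary \ref{cor:binjsur} for exactly the property it provides — the injection of cyclic orientations into cyclic orientations — and let the pigeonhole-style complementary counting do the rest, rather than attempting to re-derive the behaviour of $B$ on the hard cases.
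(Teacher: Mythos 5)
Your reduction to a counting statement is exactly the paper's opening move: $B$ is a bijection from $\mathcal{O}(G)$ to $\mathcal{O}(C_{xy}(G))$ (Lemma \ref{lem:bisabijection}), so it suffices to show that every orientation of $G$ containing a cycle is sent to an orientation containing a cycle, whence $|\mathcal{C}(G)| \le |\mathcal{C}(C_{xy}(G))|$ and $a(G) \ge a(C_{xy}(G))$. The gap is that you treat this last property as already established, attributing it to ``the construction of $B$, the verification that $D_{xy}$ is an involution, and the Path Switching Lemma.'' Those facts do not deliver it. The Path Switching Lemma only guarantees that $P_{xy}$ preserves acyclicity of $\theta - E^{xy}$, i.e.\ that $D_{xy}$ is sensibly defined; it says nothing about whether $C_{xy}(D_{xy}(\theta))$ contains a cycle. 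Corollary \ref{cor:binjsur} does formally assert the injection of cyclic orientations into cyclic orientations, but its stated justification is only ``by the construction of $B$,'' and the construction alone does not guarantee this: for a delicate orientation $\theta$ (cyclic in $G$ with $C_{xy}(\theta)$ acyclic), the redirected image $C_{xy}(D_{xy}(\theta))$ could a priori be acyclic as well. Establishing that it is not is the entire content of the theorem's proof, which is why the paper re-derives it there rather than simply citing the corollary.

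Concretely, the missing argument is this. For $\theta$ cyclic with $C_{xy}(\theta)$ acyclic, Lemma \ref{lem:samedirectionlemma} produces a cycle $uxvP^{vu}$ in $\theta$ with $(u,x) \in E^{xy}$ and $xvy$ a directed path; one then takes the last vertex $q$ of $P^{vu}$ lying on a directed path from $x$ to $y$, observes that path switching reverses the segment $P^{qy}$ into $P^{yq}$ while leaving $P^{qu}$ untouched, and exhibits the cycle $u\,y\,P^{yq}P^{qu}$ in $C_{xy}(D_{xy}(\theta))$. One must also handle the residual case in which $\theta$ and $C_{xy}(\theta)$ are both cyclic yet $B$ acts as $C_{xy}\circ D_{xy}$ because the partner $D_{xy}(\theta)$ is delicate. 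Without this analysis your proof reduces the theorem to an assertion that has not actually been proved earlier in the paper.
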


 \begin{proof}
Take a graph $G$ and two vertices $x,y \in V(G)$. Take the map ${B \! \! : \:} \mathcal{O}(G) \to \mathcal{O}(C_{xy}(G))$ defined in Definition \ref{def:mapb}. In Lemma \ref{lem:bisabijection} we show that $B$ is a bijection, so it is sufficient to show that every orientation with a cycle gets mapped to an orientation with a cycle.

Consider the case of $\theta$ cyclic such that $C_{xy}(\theta)$ is acyclic. Then $B(\theta) = C_{xy}(D_{xy}(\theta))$ by the definition of $B$. There exists a cycle $uxvP^{vu}$ in $G$ for some $(u,x) \in E_{xy}$ by Lemma \ref{lem:samedirectionlemma}. Furthermore there exists a vertex $q$ on path $P^{vu}$ that is the last vertex in $P^{vu}$ on a path from $x$ to $y$ ($v$ is in $P^{vu}$ so there exists at least one). Call the path from $q$ to $u$ $P^{qu}$ and the path from $q$ to $y$ $P^{qy}$. Path switching affects $P^{qy}$ and gives us the path $P^{yq}$, but does not affect $P^{qu}$. Now in $C_{xy}(D_{xy}(\theta))$ we have the cycle $uyP^{yq}P^{qu}$.

Next consider the case of $\theta$ cyclic such that $C_{xy}(\theta)$ is cyclic. If $B(\theta) = C_{xy}(\theta)$ we do not need to check anything. Suppose now that $B(\theta) = C_{xy}(D_{xy}(\theta)) \neq C_{xy}(\theta)$. This is only the case if $\theta_1 = D_{xy}(\theta)$ is cyclic and $C_{xy}(\theta_1)$ is acyclic by the definition of $B$. We have shown above that in this case $C_{xy}(D_{xy}(\theta_1))$ contains a cycle, which is $B(\theta)$.
\end{proof}

\subsection{Monotonicity for consolidation}

\begin{lemma} \label{lem:strictcliqueincrease}
The number of cliques is strictly increased by consolidation.
\end{lemma}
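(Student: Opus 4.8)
The plan is to derive a closed formula for the number of cliques of a graph in $\mathcal{H}$ and then read off the effect of a single consolidation step. The key structural fact is that in $H(k,n_0,\ldots,n_{k-1})$ the vertices outside the clique $K_k$ form an independent set, so every clique of the graph contains \emph{at most one} vertex from that independent set. Hence each clique is either a subset of the clique vertices, or of the form $\{v\}\cup S$ where $v$ is an independent-set vertex of degree $i$ and $S$ is a subset of its $i$-element neighbourhood (which lies inside $K_k$). A vertex of degree $i$ therefore lies in exactly $2^{i}$ cliques, and counting gives
\[ c\bigl(H(k,n_0,\ldots,n_{k-1})\bigr) = 2^{k} + \sum_{i=0}^{k-1} n_i\, 2^{i}, \]
where for convenience I include the empty clique; any other convention merely shifts $c$ by a constant and so does not affect the comparison below. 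Note that nestedness is not needed for this count, only that each neighbourhood sits inside $K_k$.

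Next I would substitute the update rule of Definition~\ref{def:comp_b} into this formula. In the generic case $l<k-1$ the clique size is unchanged, so the $2^{k}$ term cancels and only the altered values $n_{s-1},n_s,n_l,n_{l+1}$ contribute, giving
\[ \Delta c = \bigl(2^{s-1}-2^{s}\bigr)+\bigl(2^{l+1}-2^{l}\bigr) = 2^{l}-2^{s-1}, \]
which is strictly positive because $s\le l$ forces $s-1<l$. When $s=l$ the two decrements coincide (a twofold decrement at $n_s$ offset by the increment at $n_{s+1}$), and the same computation leaves the strictly positive remainder $2^{s-1}$.

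The remaining case is the boundary $l=k-1$, where the move enlarges the clique to $K_{k+1}$ and the promoted vertex is absorbed into it rather than recorded in any $n_k$. Here the leading term jumps from $2^{k}$ to $2^{k+1}$, the degree-$(k-1)$ vertex leaves the independent set so $n_{k-1}$ drops by one with no compensating $n_k$ term, and the edge removal still contributes $2^{s-1}-2^{s}$. Collecting these,
\[ \Delta c = \bigl(2^{k+1}-2^{k}\bigr) + \bigl(2^{s-1}-2^{s}\bigr) - 2^{k-1} = 2^{k-1}-2^{s-1} > 0, \]
again strictly positive since $s\le l=k-1$; the degenerate subcase $s=l=k-1$ is checked identically and yields $2^{k-2}>0$.

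I expect the only genuine obstacle to be the careful bookkeeping at the boundary $l=k-1$: one must verify that the incremented vertex truly becomes a clique vertex (hence vanishes from the independent-set count while the clique size rises by one and $n_k$ is reset to $0$), and that the remaining neighbourhoods stay nested so the closed formula continues to apply. Both are guaranteed because consolidation maps $\mathcal{H}$ into $\mathcal{H}$, as established in Lemma~\ref{lem:consolidation_terminates}. Once the formula is in hand, the conclusion reduces to comparing powers of two, so no real computation remains.
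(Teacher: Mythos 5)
Your proof is correct, but it takes a genuinely different route from the paper. The paper argues directly on the graph: it injects the cliques of $G$ into those of the consolidated graph by sending each clique through the removed edge to the corresponding clique through the added edge (using nestedness of neighbourhoods to see the image is indeed a clique), and then exhibits one additional clique through the new edge to get strictness. You instead exploit the fact that consolidation is only ever applied to graphs in $\mathcal{H}$, where the independent-set structure makes exact counting easy: your closed formula $2^{k}+\sum_{i}n_i 2^{i}$ is right (every clique meets the independent set in at most one vertex, and a degree-$i$ vertex lies in exactly $2^{i}$ complete subgraphs), and substituting the update rule gives the exact increment $2^{l}-2^{s-1}>0$, with the boundary case $l=k-1$ correctly absorbing the promoted vertex into the $2^{k+1}$ term. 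Your version buys more --- an exact clique count for all of $\mathcal{H}$ and the precise size of the increase, which makes strictness transparent --- at the cost of being specific to $\mathcal{H}$; the paper's injection argument is more local and parallels its compression proofs (Lemma~\ref{lem:compressiondoesnotdecreasecliques}), but is stated more loosely and yields only the inequality. Your observation that the choice of convention (counting or not counting the empty clique and singletons) only shifts the count by a quantity independent of the move is the right way to dispose of that ambiguity.
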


\begin{proof}
We simply find an injection and show one new clique. The only cliques affected are the cliques through the edge that is moved. Suppose the vertices of the clique are $(a,b,x_1 , ... , x_l)$ and that the edge $(a,b)$ got moved to $(c,d)$ where $a$ and $c$ are not in the clique respectively. Then we simply inject to the clique $(c,d,x_1 , ... , x_l)$. Now we have injected all cliques, but more is true. There exists a new clique containing the edge $(c,d)$ which has size of the neighbourhood of $c$, which did not exist previously, so in fact we have a strict increase.
\end{proof}

\begin{lemma} \label{lem:numberofaos}
The number of acyclic orientations of $G = H(k,n_0,n_1,...,n_{k-1})$ is

\[ a(G) = k! \cdot \prod _{i = 0} ^{k-1} (i + 1)^{n_i}. \]

\end{lemma}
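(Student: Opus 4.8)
The plan is to show that an acyclic orientation of $G = H(k,n_0,\dots,n_{k-1})$ is completely described by two independent pieces of data which can be counted separately: an acyclic orientation of the clique $K_k$, and, for each vertex of the independent set, the choice of how its incident edges split into in-edges and out-edges. Since the independent-set vertices attach only to the clique and are pairwise non-adjacent, I expect the count to factor as a product of the clique contribution and one local factor per independent vertex. The identity then reads off as $k!$ times a product of these factors.

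First I would record that the acyclic orientations of $K_k$ are exactly its transitive tournaments, which correspond bijectively to linear orders of its $k$ vertices, giving $k!$ of them; fix such an order, so $a \to b$ holds precisely when $a$ precedes $b$. Now take an independent vertex $v$ of degree $i$, adjacent to the clique vertices $s_1 \prec \dots \prec s_i$ in this order. The key observation is that, because the clique is a transitive tournament, any directed cycle through $v$ enters $v$ along some $s_b \to v$ and leaves along some $v \to s_a$, and the remainder of the cycle is a directed $s_a \rightsquigarrow s_b$ path lying in the clique; such a path exists iff $s_a \prec s_b$, in which case the edge $s_a \to s_b$ closes a triangle $v \to s_a \to s_b \to v$. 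Hence the orientation at $v$ avoids cycles iff no out-neighbour precedes an in-neighbour, i.e. iff the in-neighbours form a prefix $\{s_1,\dots,s_j\}$ and the out-neighbours the complementary suffix. This leaves exactly $i+1$ admissible local orientations, one for each cut point $j \in \{0,\dots,i\}$.

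The main obstacle is the converse, global, direction: I must confirm that once the clique is a transitive tournament and every independent vertex individually satisfies the prefix/suffix condition, the entire orientation is acyclic — in particular that no directed cycle threads through two or more independent vertices. I would settle this by producing an explicit topological order: insert each independent vertex $v$ into the clique order in the gap immediately after its last in-neighbour $s_j$ and before its first out-neighbour $s_{j+1}$ (this gap is nonempty because the prefix/suffix condition forces $s_j \prec s_{j+1}$), placing the independent vertices in arbitrary relative order since they are mutually non-adjacent. Every edge then points forward in this order, so the orientation is acyclic. Because the local choices at distinct independent vertices are genuinely independent, multiplying the $k!$ clique orientations by a factor $(i+1)$ for each of the $n_i$ vertices of degree $i$ gives
\[ a(G) = k! \cdot \prod_{i=0}^{k-1} (i+1)^{n_i}, \]
the isolated vertices contributing the harmless factor $1$.

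As a cross-check and a shorter alternative, I would note that a greedy colouring listing the clique first yields the chromatic polynomial $P(G,t) = t(t-1)\cdots(t-k+1)\prod_{i}(t-i)^{n_i}$, since a degree-$i$ vertex sees $i$ distinctly coloured clique neighbours and so has $t-i$ available colours regardless of the rest. Stanley's theorem $a(G) = (-1)^n P(G,-1)$ then reproduces the same formula once the signs collapse using $n = k + \sum_i n_i$, which I would verify as a sanity check on the combinatorial argument above.
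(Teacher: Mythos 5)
Your proposal is correct and follows essentially the same route as the paper: fix one of the $k!$ linear orders of the clique and observe that each independent-set vertex of degree $i$ can be inserted into the order in exactly $i+1$ admissible positions among its (nested, clique-contained) neighbourhood, with the choices independent across vertices. Your write-up is somewhat more careful than the paper's — you explicitly verify global acyclicity via a topological order and add a chromatic-polynomial cross-check — but the underlying decomposition is identical.
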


\begin{proof}
Observe that an acyclic orientation of a graph defines an ordering on the vertices of the graph. For a clique this is a bijection. First we order all the vertices in the k-clique and obtain an acyclic orientations of the clique in $k!$ ways. Then any vertex $x$ not in the clique can be placed in $d(x) + 1$ positions in the ordering amongst its neighbours. Each one of these leads to a unique extension of the acyclic orientation since $x$ and $\mathcal{N}(x)$ form a clique. Each such extension is independent of the others, which gives us:

\begin{equation} \label{eq:1}
 a(G) = k! \cdot \prod _{x \notin K_k} (d(x) + 1) 
\end{equation}
\begin{equation*}
= k! \cdot \prod _{i = 0} ^{k-1} (i + 1)^{n_i}.
\end{equation*}
\end{proof}

\begin{corollary} \label{cor:strictaoincrease}
The number of acyclic orientations is strictly decreased by consolidation.
\end{corollary}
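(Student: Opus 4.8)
The plan is to read off the effect of a single consolidation step directly from the closed formula of Lemma~\ref{lem:numberofaos}, or equivalently from its factored form \eqref{eq:1}, $a(G) = k!\cdot\prod_{x\notin K_k}(d(x)+1)$. Consolidation acts only when $\sum_{i=1}^{k-1}n_i\ge 2$, and when it does it changes the degree of exactly two non-clique vertices: one drops from $s$ to $s-1$ and one rises from $l$ to $l+1$, where by the choice of $s$ and $l$ we have $1\le s\le l\le k-1$. So the whole argument reduces to tracking how two factors of the product change, together with a possible change in the clique size. I would split into two cases according to whether the rising vertex stays outside the clique.

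In the case $l<k-1$ the clique size $k$ is unchanged, so only the factors $(s+1)$ and $(l+1)$ in $\prod_{x\notin K_k}(d(x)+1)$ are replaced by $s$ and $(l+2)$, giving
\[ \frac{a(G')}{a(G)} = \frac{s\,(l+2)}{(s+1)(l+1)}. \]
Expanding shows this is $<1$ precisely when $2s < s+l+1$, i.e. when $s\le l$, which holds. Hence $a(G')<a(G)$ strictly.

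In the case $l=k-1$ the rising vertex reaches degree $k$ and is absorbed into the clique, so $k\to k+1$, the leading factor changes from $k!$ to $(k+1)!$, that vertex's contribution (which was $k$) leaves the product, and the dropping vertex now contributes $s$ rather than $(s+1)$. A short computation then gives
\[ \frac{a(G')}{a(G)} = \frac{(k+1)\,s}{k\,(s+1)}, \]
which is $<1$ precisely when $s<k$; this holds since every non-clique vertex has degree at most $k-1$, so $s\le k-1<k$. Again $a(G')<a(G)$ strictly.

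The one place that needs care is this second case: one must correctly account for the vertex that joins the clique (it disappears from the product, and its old factor must be cancelled against the new $(k+1)!$ prefactor) rather than naively reusing the first-case bookkeeping. Apart from that, the result is an immediate consequence of the two elementary inequalities $s\le l$ and $s\le k-1$, both of which are strict in the relevant sense and so yield the \emph{strict} decrease claimed in the corollary.
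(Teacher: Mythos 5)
Your proof is correct and follows essentially the same route as the paper: read off the change in the product formula of Lemma \ref{lem:numberofaos}, where the factors $(s+1)$ and $(l+1)$ become $s$ and $(l+2)$, and use $s\le l$ to get a strict decrease. Your separate treatment of the case $l=k-1$ is a nice extra check, but it in fact reduces to the same ratio, since $k!\,(l+2)=(k+1)!$ when $l=k-1$, which is why the paper handles both cases uniformly.
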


\begin{proof}
Each non-trivial consolidation move replaces two factors in expression (\ref{eq:1}) $s+1$ and $l+1$ by two new factors $s$ and $l+2$. Since $l \geq s$ we have a strict decrease in the number of acyclic orientations following consolidation.
\end{proof}

Note that the number of edges of the line graph of $G$ is not monotone decreasing with respect to consolidation. This is easiest seen by considering a star, which maximises the number of edges in the line graph since any two edges in $G$ share a common vertex. Consolidation takes us away from the star, reducing the number of edges in the line graph.

\subsection{Extremal Graphs}

We would now like to show that the extremal graph is unique in both of these parameters, except for a special case.

\begin{definition}
For any $n$ and $k \le n-2$, the graph $F_{n,k}$ is the union of a complete graph $K_k$, one isolated edge and $n-k-2$ isolated vertices. Observe that $F_{n,k}$ has $n$ vertices and $\binom{k}{2} +1$ edges.
\end{definition}

\begin{theorem}
For graphs with $n$ vertices and $m$ edges, the graph $H_{n,m}$ maximises the number of cliques and minimises the number of acyclic orientations. When $m = \binom{k}{2} +1$ for some integer $k$ the graph $F_{n,k}$ also maximises the number of cliques and minimises the number of acyclic orientations. No other graphs take either of these extremal values.
\end{theorem}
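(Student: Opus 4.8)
The plan is to treat the two parameters in parallel, since the reduction to $H_{n,m}$ is identical for each, and then to isolate the exceptional graph $F_{n,k}$.

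\emph{Existence.} That $H_{n,m}$ attains both extremal values is immediate from the machinery already in place: every graph compresses into $\mathcal H$ without decreasing the clique count (Lemma \ref{lem:compressiondoesnotdecreasecliques}) and without increasing the number of acyclic orientations (Theorem \ref{lem:compressionlemma}), and consolidation then carries the resulting member of $\mathcal H$ to $H_{n,m}$ while strictly increasing cliques (Lemma \ref{lem:strictcliqueincrease}) and strictly decreasing acyclic orientations (Corollary \ref{cor:strictaoincrease}). For $F_{n,k}$ I would compute both invariants directly from the decomposition $F_{n,k}=K_k\sqcup K_2\sqcup\overline{K}_{n-k-2}$: the number of nonempty cliques is additive over components and the number of acyclic orientations is multiplicative, giving $2^k+n-k$ cliques and $2\cdot k!$ acyclic orientations. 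When $m=\binom{k}{2}+1$ the single non-clique vertex of $H_{n,m}$ has degree $1$, and the same two numbers come out of $H_{n,m}$ (using Lemma \ref{lem:numberofaos} for the orientations), so $F_{n,k}$ is also extremal in this case.

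\emph{Reduction for uniqueness.} Let $G$ be any extremal graph and run compression followed by consolidation, $G\rightsquigarrow H\in\mathcal H\rightsquigarrow H_{n,m}$. Because consolidation is \emph{strictly} monotone in both parameters and $H_{n,m}$ is the only consolidation-fixed graph in $\mathcal H$ with the given $n,m$ (a short degree-sequence check: there is a unique $k$ with $\binom{k}{2}\le m<\binom{k+1}{2}$, and then the single attached degree $j=m-\binom k2\in\{0,\dots,k-1\}$ is forced), extremality forces $H=H_{n,m}$ and forces every individual compression used in $G\rightsquigarrow H_{n,m}$ to be \emph{parameter-neutral}. Thus the problem reduces to classifying the graphs admitting a chain of parameter-neutral compressions to $H_{n,m}$.

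\emph{Crux.} The engine is a criterion for when a single non-isomorphic $C_{xy}$ leaves the parameter unchanged. For cliques, inspecting the injection of Lemma \ref{lem:compressiondoesnotdecreasecliques} shows that $C_{xy}$ is clique-neutral precisely when no edge joins $\mathcal N(x)\setminus\mathcal N(y)$ to $\mathcal N(y)\setminus\mathcal N(x)$: such an edge, together with $y$, would produce a clique in $C_{xy}(G)$ outside the image of the injection. A parallel analysis of the bijection $B$ of Definition \ref{def:mapb} yields the analogous criterion for acyclic orientations. I would then argue, by induction on the length of the reduction, that the only graphs all of whose compressions toward $H_{n,m}$ are neutral are $H_{n,m}$ itself and, when $m=\binom k2+1$, the graph $F_{n,k}$: neutrality is rigid enough that a moved edge can only be a pendant edge hanging off the clique or a single separate isolated edge, and as soon as $m-\binom k2\ge 2$ any such detachment changes one of the two counts strictly, which both forces the hypothesis $m=\binom k2+1$ and pins down the component as a lone $K_2$.

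The main obstacle is exactly this final classification. Verifying the single-step neutrality criteria is routine bookkeeping, but showing that \emph{global} neutrality of an entire reduction chain admits no configuration beyond $F_{n,k}$ — in particular ruling out graphs with two or more vertices attached to the clique, or with a larger separate component (e.g.\ neutral pre-images of $F_{n,k}$ itself), that might keep \emph{both} counts constant simultaneously — is where the real work lies, as is the verification of the boundary case $k=n-1$ where $F_{n,k}$ fails to exist and $H_{n,m}$ is therefore the unique extremal graph.
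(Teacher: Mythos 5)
Your existence argument and your reduction of uniqueness to ``every compression in the chain must be parameter-neutral'' match the paper's strategy, and your direct computation of the two invariants for $F_{n,k}$ (namely $2^k+n-k$ cliques and $2\cdot k!$ acyclic orientations) is a sound way to verify the exceptional case. The gap is in the crux, and you have correctly located it yourself: you propose to classify all graphs admitting a globally neutral compression chain by first establishing single-step neutrality criteria for an arbitrary $C_{xy}$ and then running an induction, but you do not carry out that classification, and it is the entire content of the uniqueness claim. A single-step criterion for cliques (no edge between $\mathcal N(x)\setminus\mathcal N(y)$ and $\mathcal N(y)\setminus\mathcal N(x)$) does not by itself control what a neutral chain can look like several steps away from $H_{n,m}$, and the analogous criterion for acyclic orientations via the bijection $B$ is not written down at all.

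The missing idea that makes the classification tractable in the paper is to analyse compressions by their \emph{target} rather than by their source. If $G$ is extremal then the last graph $G'$ in its reduction chain satisfies $C_{xy}(G')=H_{n,m}$ (or $=F_{n,k}$), and because $H_{n,m}$ has a completely explicit structure (isolated set $V_0$, a single attached vertex $w$, clique $V_k$), one can pin down where $x$ and $y$ must sit: $y\notin V_0$ and $x\notin V_k$, leaving only three configurations, each checked by hand to be strict unless $G'\cong H_{n,m}$ or $G'\cong F_{n,k}$. Pre-images of $F_{n,k}$ are handled by observing that a compression touching the isolated edge forces $G'\cong F_{n,k}$, and otherwise one deletes the isolated edge and repeats the argument for $H_{n,\binom{k}{2}}$; induction backwards along the chain then finishes. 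If you replace your open-ended global-neutrality classification with this target-based case analysis (and also supply the acyclic-orientation check, which in the paper requires exhibiting an explicit acyclic orientation whose image under $B$ contains a cycle in the third configuration), your outline becomes a complete proof.
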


\begin{proof}
Given a graph $G$ we know that the fully compressed $G$ lies in $\mathcal{H}$ by Theorem \ref{thm:compressed}. We know that the final consolidation move takes us to the graph $H_{n,m}$ in Lemma \ref{lem:consolidation_terminates} and that we have a strict increase in the number of cliques and strict decrease in the number of acyclic orientations respectively (Lemma \ref{lem:strictcliqueincrease} and Corollary \ref{cor:strictaoincrease}) for consolidation. Thus we need only show that the compression moves that take us directly to $H_{n,m}$ or $F_{n,m}$ strictly increase or strictly decrease the parameters respectively.

Consider a graph $G$ and a compression such that $C_{xy}(G) = F_{n,k}$. If compression $C_{xy}$ involves the isolated edge in $F_{n,k}$ then this edge is of the form $(v,y)$ and thus $(v,x)$ is an isolated edge in $G$, hence $G \cong F_{n,k}$. Now we can ignore this edge and are left with the graph $H_{n,\binom{k}{2}}$.

Take a graph $G$ and vertices $x$ and $y$ such that $G$ is not in $\mathcal{H}$ but $C_{xy}(G) = H_{n,m}$. The vertices of $H_{n,m}$ consists of the following sets:

\begin{itemize}
  \item $V_0$ - the set of isolated vertices
  \item $w$ - the unique vertex connected to the clique but not in the clique
  \item $V_k$ - the set of vertices in the clique
\end{itemize}

Observe that $y \notin V_0$, as there are no edges in $C_{xy}(G)$ going to the set $V_0$. In addition, $x \notin V_k$, as a compression to another vertex in $V_k$ would have been an isomorphism, and a compression to a vertex in $V_0$ or to vertex $w$ would have moved some other edges too, a contradiction. We are left with the following cases, which we need to check for both graph parameters

\begin{enumerate}

  \item  $x\in V_0$, $y=w$ \label{move_1}
  \item  $x\in  V_0$, $y\in V_k$ \label{move_2}
    \item  $x=w$, $y\in V_k$ with $d(w) \geq 1$ in $C_{wy}(G)$.  \label{move_3}
\end{enumerate}

If no cycles or cliques are destroyed by compression, it is sufficient to show the existence of a new clique for a strict increase in the number of cliques. Furthermore this shows a strict decrease in the number of acyclic orientations, as the new clique adds an extra restriction to the orientations.

Move (\ref{move_1}) does not destroy any cycles or cliques and creates a new clique of size $d(w) +1$ in $C_{xy}(G)$ unless $d(w) = 0$ in $G$ in which case we have an isomorphism. For move (\ref{move_2}) we note that we have a new clique of size $d(w) + 1$, unless $d(w) = 1$ in $C_{xy}(G)$, in which case we could have been compressed from the graph $F_{n,k}$, which has the same number of cliques and acyclic orientations, or we have an isomorphism.

For move (\ref{move_3}) to be a non-isomorphic move there must exist a vertex $z_1 \in V_k$ such that $(z_1, w) \in E(G)$ but $(z_1,y) \notin E(G)$. Since $d(w) \geq 1 $ in $C_{wy}(G)$, there must also exist a vertex $z_2 \in V_k$ such that $(z_2, w), (z_2, y) \in E(G)$. If all vertices in the clique connected to $y$ in $G$ are also connected to $w$, then the compression $C_{wy}(G)$ is an isomorphism which swaps the labels of $w$ and $y$. Hence there exists a vertex $z_3 \in V_k$, $(z_3,y) \in E(G), (z_3,w) \notin E(G). $

Thus for a compression move of type (\ref{move_3}) to be non-isomorphic, the vertices in $V_k$ form a clique of size $k$ in $C_{xy}(G)$ but not in $G$, since $(z_1,y) \notin E(G)$. So by Lemma \ref{lem:compressiondoesnotdecreasecliques} we have a strict increase in the number of cliques.

To show a strict decrease in the number of acyclic orientations we need only find an acyclic orientation in $C_{wy}(G)$ that is mapped to an orientation with a cycle by the map ${B}$, by Corollary \ref{cor:binjsur}. Consider the acyclic orientation $\theta$ defined by the vertex ordering in which the first five vertices are $z_2,y,z_3,z_1,w$ in that order and all other vertices are after this in any order. If the edge $(w,y) \notin G$ then $D_{wy}(\theta)$ is the same as $\theta$ and thus ${B}(\theta) = C_{wy}(\theta)$ which contains the cycle $yz_3z_1$. On the other hand, if $(w,y) \in G$, then both $C_{wy}(\theta)$ and $C_{wy}(D_{wy}(\theta))$ contain the cycle $yz_3z_1$. Thus $B(\theta)$ contains a cycle. 
\end{proof}

\section{Other parameters}

Some other graph parameters, such as the number of connected components and the number
of Euler subgraphs, are also maximised by $H_{n,m}$, but it is not the unique
maximising graph. We give brief arguments for this, independent of the main
thrust of the paper. 

\begin{proposition} \label{prop:connectedcomponents}
Given $n$ and $m$ with $m\le{n\choose2}$, let $k$ be the largest integer such that $m > {k\choose2}$. If $m\neq{k\choose2}+1$, then the number of connected components of a graph with $n$ vertices and $m$ edges is maximised by precisely those graphs with a single component of size $k+1$ and $n-k-1$ isolated
vertices. 

If $m={k\choose2}+1$, then the maximising graph are all of the form of a
complete graph $K_{k}$, one further edge (which may or may not be connected to the
complete graph) isolated vertices (if any). Thus the maximum number of connected components $c$ is $c = n-k$.
\end{proposition}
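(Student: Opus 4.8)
The plan is to reduce the whole statement to a single convexity (superadditivity) inequality for the function $\binom{\cdot}{2}$. First I would set up the bookkeeping. If a graph on $n$ vertices has non-trivial components (those containing at least one edge) of sizes $p_1,\dots,p_t\ge 2$, then it has $n-\sum_i p_i$ isolated vertices, so its number of components is
\[ c = t + \Bigl(n-\sum_i p_i\Bigr) = n - \sum_i (p_i-1). \]
Writing $R = \sum_i(p_i-1) = n-c$ for the number of ``surplus'' vertices in the non-trivial components, maximising $c$ is exactly minimising $R$.

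The crucial estimate is $\sum_i \binom{p_i}{2} \le \binom{R+1}{2}$. I would prove it by induction on $t$ from the two-term inequality $\binom{a}{2}+\binom{b}{2}\le\binom{a+b-1}{2}$, whose difference equals $(a-1)(b-1)$; this is strictly positive once $a,b\ge 2$, so the bound is \emph{strict} as soon as there are at least two non-trivial components. Since all $m$ edges lie inside the non-trivial components, $m\le\sum_i\binom{p_i}{2}\le\binom{R+1}{2}$. Combining with the hypothesis $\binom{k}{2}<m$ gives $\binom{R+1}{2}>\binom{k}{2}$, hence $R\ge k$ and therefore $c\le n-k$. This yields the upper bound and explains the value $n-k$ at once.

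It then remains to characterise equality, i.e.\ the graphs with $R=k$. If there is a single non-trivial component ($t=1$) it has size $k+1$, and its capacity $\binom{k+1}{2}\ge m$ (using $m\le\binom{k+1}{2}$, which follows from the maximality of $k$) makes every connected graph on $k+1$ vertices with $m$ edges admissible; with $n-k-1$ isolated vertices these are the single-component maximisers. If instead $t\ge 2$, the estimate above is strict, so $m\le\sum_i\binom{p_i}{2}<\binom{k+1}{2}=\binom{k}{2}+k$. I would then maximise $\sum_i\binom{p_i}{2}$ over all configurations with $t\ge 2$ and $\sum_i(p_i-1)=k$: by convexity the most unbalanced split $\{k,2\}$ is extremal and gives $\binom{k}{2}+1$. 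Hence $m\le\binom{k}{2}+1$, which together with $\binom{k}{2}<m$ forces $m=\binom{k}{2}+1$; feasibility then pins the configuration to a complete $K_k$ alongside a disjoint edge, since that is the only split attaining capacity $\binom{k}{2}+1$ and each component must be filled to capacity. This produces precisely the additional maximiser in the special case, again with $c=n-k$.

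The step I expect to be the main obstacle is exactly this equality analysis for $t\ge 2$: the global bound $\binom{R+1}{2}$ is not tight there, so I need the sharper extremal computation of $\max\sum_i\binom{p_i}{2}$ under fixed surplus $R=k$ and at least two components, and must confirm that the maximiser is the unbalanced $\{k,2\}$ split (checking both the convexity argument for fixed $t$ and monotonicity in $t$). Once that refined bound is in place, the hypothesis $m>\binom{k}{2}$ does the rest, squeezing $m$ to $\binom{k}{2}+1$ and isolating the configuration $K_k$ together with a disjoint edge. I would also verify the degenerate values of $k$ (for instance $m=1$, giving $k=1$) separately to confirm the statement reads correctly there.
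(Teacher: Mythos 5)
Your proof is correct, but it takes a genuinely different route from the paper's. The paper argues by local exchange moves: if two non-trivial components of sizes $l,l'\ge 3$ coexist, their edges fit into one component of size $l+l'-2$ (since $\binom{l}{2}+\binom{l'}{2}\le\binom{l+l'-2}{2}$), gaining a component; then two isolated edges can be merged into an existing component; so an optimum has one large component plus at most one isolated edge, which is then eliminated unless $m=\binom{k}{2}+1$. You instead prove a global bound: with non-trivial component sizes $p_1,\dots,p_t$ and surplus $R=\sum(p_i-1)=n-c$, the superadditivity $\binom{a}{2}+\binom{b}{2}\le\binom{a+b-1}{2}$ (with defect $(a-1)(b-1)$) gives $m\le\sum\binom{p_i}{2}\le\binom{R+1}{2}$, whence $R\ge k$ and $c\le n-k$ in one stroke, and the strictness of the defect for $t\ge 2$ drives the equality analysis. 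The key computations all check out: the induction to $\binom{R+1}{2}$, the deduction $R\ge k$ from $m>\binom{k}{2}$, and the refined maximum $\max\sum\binom{p_i}{2}=\binom{k}{2}+1$ over splits with $t\ge 2$ and surplus $k$ (attained uniquely at $\{k,2\}$, and decreasing in $t$ since the step from $t$ to $t+1$ costs $k-t\ge 0$). What your approach buys is a complete and rigorous equality characterisation, which the paper's sketch does not fully carry out; in particular your $t=1$ analysis correctly identifies \emph{every} connected graph on $k+1$ vertices with $m$ edges (plus isolated vertices) as a maximiser even when $m=\binom{k}{2}+1$, whereas the proposition's second paragraph, read literally, lists only $K_k$ with a pendant or disjoint edge and thereby omits maximisers such as the $4$-cycle when $k=3$, $m=4$. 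Your closing remarks about checking degenerate $k$ and the monotonicity in $t$ are exactly the right loose ends, and both are easily verified, so nothing essential is missing.
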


\begin{proof}
Suppose that $G$ has the maximum number of components. If $G$ has more than two connected components components of size $l,l'\geq 3$ then the edges in connected components of sizes $l$ and $l'$ can be
fitted inside a component of size $l+l'-2$, increasing the number of components
by~$1$, as ${l\choose2}+{l'\choose2}\le{l+l'-2\choose2}$ for $l,l'\geq3$. Thus the maximum is achieved by a graph $G$ with (at most) one component $H$ on more than two vertices and some isolated edges. But two isolated edges can be replaced by two edges joining a single vertex to a component of
size at least $2$, again reducing the number of components; so there is at
most one isolated edge. If $m\neq{{k+1}\choose2}$, then $H$ has a non-edge
which can be replaced by an edge and the isolated edge deleted.
\end{proof}

Note that the graphs in the set $\mathcal{H}$ are of the form described in Proposition \ref{prop:connectedcomponents}, which maximise the number of components. Thus $H_{n,m}$ in particular maximises the number of components of a graph with $n$ vertices and $m$ edges. The same conclusion holds for Euler subgraphs since the number of these
is a monotonic function of the number of connected components, as we now show.

\begin{proposition}
A graph $G$ with $n$ vertices, $m$ edges and $c$ connected components has
$2^{m-n+c}$ Euler subgraphs.
\end{proposition}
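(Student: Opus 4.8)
The plan is to identify the Euler subgraphs of $G$ with the elements of a vector space over $\mathbb{F}_2$ and count them by computing its dimension. I would regard each subgraph as the subset of edges it contains, so that the collection of all such subgraphs is identified with $\mathbb{F}_2^m$, with the symmetric difference of edge sets corresponding to vector addition. The key observation is that the Euler subgraphs --- those in which every vertex has even degree --- form a linear subspace $\mathcal{Z}$ of $\mathbb{F}_2^m$: if two subgraphs each have all degrees even, then in their symmetric difference the degree of each vertex is the sum modulo $2$ of its two former degrees, hence again even, and the empty subgraph supplies the zero vector.

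Next I would realise $\mathcal{Z}$ as the kernel of the incidence map. Let $\partial \colon \mathbb{F}_2^m \to \mathbb{F}_2^n$ send a single edge to the sum of its two endpoints and extend linearly; then the coordinate of $\partial(S)$ at a vertex $v$ is exactly the degree of $v$ in $S$ modulo $2$, so that $\mathcal{Z} = \ker \partial$. By the rank--nullity theorem, $\dim \mathcal{Z} = m - \operatorname{rank}(\partial)$, and therefore $|\mathcal{Z}| = 2^{m - \operatorname{rank}(\partial)}$. It thus remains only to show $\operatorname{rank}(\partial) = n - c$.

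The main work --- and the step I expect to be the crux --- is computing this rank. The image of $\partial$ lies inside the subspace of $\mathbb{F}_2^n$ consisting of vectors whose coordinate sum over each connected component vanishes; this subspace has codimension $c$ and hence dimension $n-c$, giving $\operatorname{rank}(\partial) \le n - c$. For the reverse inequality I would exhibit $n-c$ independent boundary vectors by choosing a spanning forest $F$, which has exactly $n-c$ edges: the boundary vectors of the edges of $F$ are linearly independent, since any nontrivial $\mathbb{F}_2$-combination of them corresponds to a nonempty even subgraph contained in $F$, which is impossible because a forest is acyclic. This yields $\operatorname{rank}(\partial) = n-c$, whence $\dim \mathcal{Z} = m - n + c$ and $|\mathcal{Z}| = 2^{m-n+c}$, as required.

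Alternatively, one could argue bijectively in the spirit of the rest of the paper: fixing a spanning forest $F$ together with its $m-n+c$ chords, send each subset $S$ of chords to the symmetric difference of the fundamental cycles of the chords in $S$. Each fundamental cycle is an Euler subgraph containing exactly one chord, namely its own, so this map is an injection onto the Euler subgraphs whose chord set is precisely $S$; surjectivity follows because an Euler subgraph with empty chord set lies inside the forest and must therefore be empty. Either route delivers the count $2^{m-n+c}$.
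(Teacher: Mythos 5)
Your first argument is, in substance, the paper's own proof: the paper identifies subgraphs with characteristic vectors in $\mathbb{Z}_2^m$ and characterises the Euler subgraphs as the vectors orthogonal to all vertex stars, which is exactly your description of them as the kernel of the incidence map $\partial$, since the vertex stars are the rows of the incidence matrix. The difference is one of completeness. The paper asserts that the $n$ stars satisfy $c$ independent linear relations and stops there; the exhibited relations (sum of stars over each component is zero) only bound the rank of the star space above by $n-c$, i.e.\ bound the dimension of the Euler space below by $m-n+c$, and the paper never argues that there are no further relations. Your spanning-forest step --- the boundary vectors of the $n-c$ forest edges are linearly independent because a nontrivial dependence would give a nonempty even subgraph inside an acyclic graph --- supplies precisely this missing reverse inequality, so your write-up is the more rigorous of the two. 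Your second, bijective argument via fundamental cycles of the chords of a spanning forest is a genuinely different route not taken by the paper: it avoids rank--nullity entirely and gives an explicit parametrisation of the Euler subgraphs by their chord sets, which is closer to the combinatorial, move-based spirit of the rest of the paper, at the modest cost of fixing a forest and checking the chord bookkeeping. Both routes are correct.
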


\begin{proof}
A subgraph of $G$ can be represented by its characteristic function, a vector
in $\mathbb{Z}_2^m$. The subgraph is Eulerian if and only if the vector is
orthogonal to all vectors of vertex stars in $G$. There are $n$ vertex stars, satisfying
$c$ independent linear relations (the binary sum of the stars of all vertices
in a connected component is zero). So the Euler subgraphs form a subspace of
dimension $m-n+c$.
\end{proof}
\section{Further directions}

There are a number of other graph parameters which are related to those
we have considered here. For example, an orientation of a graph is
\emph{totally cyclic} if it has the property that each edge lies in a
directed cycle. A graph has a totally cyclic orientation if and only
if it is bridgeless. Similarly, there is a lot of interest in the number
of \emph{spanning trees} of a graph; this is non-zero if and only if the
graph is connected.

The number of totally cyclic orientations is not monotonic under compression. 
If we start with a $5$-cycle, compress to a $4$-cycle with a pendant edge,
and compress again to a $4$-cycle with a chord, the number of totally
cyclic orientations goes from $2$ to $0$ and then to $6$.

The number of acyclic orientations, the number of totally cyclic orientations and the number of spanning trees of $G$ are all evaluations of the \emph{Tutte
polynomial} $T_G$, a two-variable polynomial introduced by Tutte in 1947
\cite{Tutte1947}. The Tutte polynomial has a number of specializations of
interest, for example
\begin{itemize}
\item $T(2,0)$ is the number of acyclic orientations \cite{Stanley};
\item $T(1,1)$ is the number of spanning trees;
\item $T(0,2)$ is the number of totally cyclic orientations \cite{Vergnas1980};
\item $T(1,2)$ is the number of spanning subgraphs.
\end{itemize}
See Sokal \cite{Sokal} for a survey.

The number of spanning trees is also one of a class of parameters depending
on the Laplacian eigenvalues of a graph, which are used in statistical design
theory as measures of the optimality of block designs; a design is D-optimal
if it maximizes the number of spanning trees in its concurrence graph. Related
concepts are A-optimality (minimizing the average resistance between vertices,
when the edges of the graph are one-ohm resistors), and E-optimality (which is
connected with isoperimetric number). See \cite{BaileyCameron} for a survey.

There are some problems in applying our methods to these parameters,
apart from the non-monotonicity mentioned above. First,
several of them (such as numbers of totally cyclic orientations and spanning
trees) are zero for disconnected graphs, so the minimization problem is
trivial. For these and other parameters, including the ones we treated in this
paper, we could pose the question:
\begin{question}
Which graphs realize the extremal values of the parameters in question if we
restrict to \emph{connected} graphs?
\end{question}

The big problem is that, for many of these parameters, the extremum in the
other direction is considerably more interesting. Which graphs maximize
the number of acyclic orientations, or of spanning trees, or of totally
cyclic orientations, or minimize the average resistance between pairs of
vertices? These may not be so easy to deal with, but we could ask:
\begin{question}
Is there an ``anti-compression'' move which finds the graphs at the opposite
extreme?
\end{question}

\section{Acknowledgements}
Robert Schumacher was funded by the EPSRC Grant EP/P504872/1.

\bibliographystyle{plain}
\bibliography{Bibtex2}

\end{document}